\newcommand{\R}{{\mathbb {R}}}
\newcommand{\Q}{{\mathbb Q}}
\newcommand{\N}{{\mathbb N}}
\newcommand{\Z}{{\mathbb Z}}
\newcommand{\C}{{\mathbb C}}
\newcommand{\T}{{\mathbb T}}
\newcommand{\Rr}{{\mathfrak R}}
\newtheorem{proposition}[equation]{Proposition} 
\newtheorem{theorem}{Theorem}[section]
\newtheorem{lemma}[theorem]{Lemma}
\theoremstyle{definition}
\newtheorem{example}[theorem]{Example}
\theoremstyle{remark}
\newtheorem{remark}[theorem]{Remark}
\numberwithin{equation}{section}
\title[] {$\ell^p(\Z^n)$-estimate for long $r$-variational seminorm of discrete Birch-Magyar averages}
 \keywords{}
\author[Bhojak]{Ankit Bhojak}
\address{Ankit Bhojak}
\email{ankitb@iiserb.ac.in}
\author[Samanta]{Siddhartha Samanta}
\address{Siddhartha Samanta}
\email{siddhartha21@iiserb.ac.in}
\author[Shrivastava]{Saurabh Shrivastava}
\address{Saurabh Shrivastava}
\email{saurabhk@iiserb.ac.in}
 \address{Department of Mathematics\\
 	Indian Institute of Science Education and Research Bhopal\\
 	Bhopal-462066, India.}
\subjclass[2020]{Primary 11P55, 11D72, Secondary 42B25, 37A46 }
\keywords{Discrete Birch-Magyar average, lacunary sequence, long variation, regular form, variational estimates}
\begin{document}
\begin{abstract}
	We prove $\ell^p(\Z^n)-$estimates for long $r$-variational seminorm of two families of averages: discrete Birch-Magyar averages, for $r>max\{p,p'\}$ with $p>\frac{2c_{\mathfrak{R}}-2}{2c_{\mathfrak{R}}-3}$ and discrete Hardy-Littlewood type averages over certain algebraic varieties, for $r>max\{p,p'\}$ with $p>1$. Further, we discuss an application of these results in ergodic theory.	
\end{abstract}
\maketitle
\section{Introduction}\label{sec:intro}
	Let $D\subset\N$, and $f:\Z^n\rightarrow\C$ be a function in $\ell^p(\Z^n)$ with $1\leq p\leq\infty$. For a family of operators $\mathcal{A}:=\{A_{\lambda}:\lambda\in D\}$ acting on $f$ and $r\in [1,\infty]$, we define the $r$-variations corresponding to the family $\mathcal{A}f=\{A_{\lambda}f:\lambda\in D\}$ by
	\begin{align}
		V_{r}(\mathcal{A}f)(x) := \begin{cases} 							\sup\limits_{L\in\N}										\sup\limits_{\lambda_1<\dots<\lambda_L}\left(\sum\limits_{l=1}^{L-1}|A_{\lambda_{l+1}}f(x)-A_{\lambda_{l}}f(x)|^r\right)^{\frac{1}{r}}, & 0<r<\infty, \\
	\sup\limits_{\lambda_0<\lambda_1}\left|A_{\lambda_{1}}f(x)-				A_{\lambda_{0}}f(x)\right|, & r=\infty,\end{cases}
	\label{definition of variation}
	\end{align}
   
    where the former supremum is taken over all finite increasing sequences $\{\lambda_1,\dots,\lambda_L\}$ belonging to $D$. The study of $r$-variation was initiated in the work of L\'{e}pingle \cite{variationformartingales}. Bourgain's seminal work \cite{Pointwiseergodictheorems} on variational estimates for ergodic averages introduced a new direction of research in harmonic analysis. Since then, extensive studies of $r$-variation for various classes of  discrete and continuous averages have been conducted. We refer to \cites{DimensionfreeestimatesdiscreteHL,Bootstrap,variationalestimatesradon,Variationalongprimesandpolynomials, variationalinequalitycontinuous} and references therein for more details. In partiuclar, Jones, Seeger, and Wright \cite{variationalinequalitycontinuous} studied $L^p-$estimates of the $r$-variation for continuous spherical averages. To the best of our knowledge, the study of variational estimates for the class of discrete averages over certain surfaces, which includes the sphere, are not known.
	The purpose of this article is to initiate the study of $\ell^{p}(\mathbb{Z}^{n})$-boundedness of $r$-variations of the discrete Birch-Magyar averages and discrete Hardy-Littlewood type averages over certain algebraic varieties. 
	
	\subsection{Birch-Magyar averages} The framework for the Birch-Magyar averages was developed in the work of Birch~\cite{Birch} and Magyar~\cite{Magyar2}. Let us first briefly introduce the notion of Birch-Magyar averages. Recall that a homogeneous polynomial with integral coefficients is called an integral form. Let $\mathfrak{R}(x)$ be an integral form of degree $d>1$  in $n$ variables. The Birch rank of $\mathfrak{R}$, denoted by $\mathcal{B}(\mathfrak{R}),$ is defined by
	\[\mathcal{B}(\mathfrak{R})= n- \text{dim}(\mathcal{N}(\mathfrak{R})),\]
 	where
 	\[\mathcal{N}(\mathfrak{R})=\{z\in\mathbb{C}^{n}:\partial_{z_{1}}\mathfrak{R}(z)=\partial_{z_{2}}\mathfrak{R}(z)=\cdots=\partial_{z_{n}}\mathfrak{R}(z)=0\}.\] 
	We use the following standard notation 
	$$c_{\mathfrak{R}}=\frac{\mathcal{B}(\mathfrak{R})}{(d-1)2^{d-1}}~~\text{and}~~ \eta_{\mathfrak{R}}=\frac{1}{6d}\left(\frac{c_{\mathfrak{R}}}{2}-1\right).$$
	Note that  $d\eta_{\mathfrak{R}}<c_{\mathfrak{R}}-2$. For a given compactly supported smooth function $\phi$ with $0\leq\phi\leq 1$, an integral form $\mathfrak{R}(x)$ of degree $d>1$ in $n$ variables is called $\phi-$regular if there exists a non-singular solution of $\mathfrak{R}(x)=1$ in the support of $\phi$ and Birch rank of $\mathfrak{R}(x)$ is greater than $(d-1)2^{d}$. We note that $c_{\mathfrak{R}}>2$ if $\mathfrak{R}(x)$ is $\phi-$regular.

	Consider the counting function  $r_{\mathfrak{R},\phi}(\lambda)=\sum\limits_{\mathfrak{R}(x)=\lambda}\phi\left(\frac{x}{\lambda^{\frac{1}{d}}} \right)$. It is known, see~ \cites{Birch, Magyar2}, that for a $\phi-$regular form $\mathfrak{R}(x),$  there is an infinite arithmetic progression $\Lambda$ of regular values $\lambda\in \Lambda$ satisfying $r_{\mathfrak{R},\phi}(\lambda)\approx\lambda^{\frac{n}{d}-1}$. Henceforth, for a $\phi-$regular form $\mathfrak{R}(x)$, let $\Lambda$ denote such an infinite  arithmetic progression. We refer the readers to \cite{Birch}, \cite{Magyar2}, and \cite{Cook&Hughes1} for more details on $\phi-$regular forms. For a $\phi-$regular form $\mathfrak{R}(x)$ of degree $d>1$  in $n$ variables and $\lambda\in \Lambda$, the Birch-Magyar average of $f$ is defined by 
	\begin{align}
		A_{\lambda}^{\mathfrak{R}}f(x)=\frac{1}{r_{\mathfrak{R},\phi}(\lambda)}\sum_{y:\mathfrak{R}(y)=\lambda}\phi\left(\frac{y}{\lambda^{\frac{1}{d}}} \right)f(x-y).\label{noteq119}
	\end{align}
	Note that the definition above encompasses the $k-$spherical averages studied in \cite{Magyar1997}, \cite{MSW2002}, \cite{Hughes2017}, \cite{KeHughes2017}, and \cite{ACHughes2018}. Recall that a sequence $\{\lambda_{l}\}_{l=1}^{\infty}$ is called lacunary if there exists $c>1$ such that $\frac{\lambda_{l+1}}{\lambda_{l}}\geq c,$ for all $l\geq1.$ For a lacunary sequence $\mathbb{L}=\{\lambda_l\}_{l=1}^{\infty}\subset \Lambda$, the lacunary maximal function associated to the Birch-Magyar averages is defined as
	\[A_{lac}^{\mathfrak{R}}f(x)=\sup\limits_{\lambda\in\mathbb{L}}|A_{\lambda}^{\mathfrak{R}}f(x)|.\]  
	Cook and Hughes \cite{Cook&Hughes1} prove that the lacunary maximal operator $A_{lac}^{\mathfrak{R}}$ is bounded on $\ell^{p}(\mathbb{Z}^{n})$ for $p>\frac{2c_{\mathfrak{R}}-2}{2c_{\mathfrak{R}}-3}$ extending the result of Kesler, Lacey and Mena \cite{Lacey3} in the spherical case. Moreover, building on a counterexample of Zienkiewicz, they showed that there exists a lacunary sequence $\{\lambda_{l}\}_{l=1}^{\infty}$ such that the corresponding maximal operator  $A_{lac}^{\mathfrak{R}}$ is not bounded on $\ell^{p}(\mathbb{Z}^{n})$ for the range $1< p<\frac{n}{n-1}$. However, it remains unknown that whether the maximal operator $A_{lac}^{\mathfrak{R}}$ is bounded on $\ell^p(\Z^n)$ in the range $\frac{n}{n-1}\leq p\leq\frac{2c_{\mathfrak{R}}-2}{2c_{\mathfrak{R}}-3}$. We refer the readers to \cite{Lacey3, Cook2, BCSS} for discrete maximal functions over sparser sequences, for which the boundedness does hold for $p>1$.

	Let $\mathcal{A}_\mathbb{L}^{\Rr}=\{A_\lambda^{\Rr}:\;\lambda\in\mathbb{L}\}$ be the family of Birch-Magyar averages over lacunary sequence $\mathbb{L}\subset\:\Lambda$. We define the long $r-$variations associated to the family $\mathcal{A}_\mathbb{L}^{\Rr}$ as follows.
	\[V_r(\mathcal{A}_\mathbb{L}^\Rr f)(x):=V_r(\{A_\lambda^\Rr f,\;\lambda\in\mathbb{L}\})(x).\]
    We note that
	\[|A_{lac}^\Rr f(x)|\leq |A_{\lambda_0}^{\Rr}f(x)|+2V_r(\mathcal{A}_\mathbb{L}^\Rr f)(x),\]    
  	holds for any $\lambda_0\in\N$ and $1\leq r<\infty$. Thus, the $\ell^p-$bounds for the long $r-$variations imply the corresponding bounds for the lacunary maximal operator $A_{lac}^\Rr$. Our first main result concerns the $\ell^p-$bounds for the long $r-$variations associated to the Birch-Magyar averages. We have the following.
	\begin{theorem}\label{thm:varBM}
		Let $\mathfrak{R}(x)$ be a $\phi-$regular form in $n$ variables of degree $d>1$ and $\mathbb{L}\subset\Lambda$ be a lacunary sequence. Then for $r>max\{p,p'\}$ with $p>\frac{2c_{\mathfrak{R}}-2}{2c_{\mathfrak{R}}-3}$, we have
		\begin{equation}\label{mainineq:Vr}
			\|V_r(\mathcal{A}_\mathbb{L}^\Rr f)\|_{\ell^p(\Z^n)}\lesssim \|f\|_{\ell^p(\Z^n)},
		\end{equation}
		where $p'$ is the H\"older conjugate of $p$ given by $\frac{1}{p}+\frac{1}{p'}=1$.
	\end{theorem}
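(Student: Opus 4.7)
The plan is to invoke the Hardy--Littlewood circle method together with a variational interpolation argument. I would proceed in three steps.

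First, following Cook--Hughes \cite{Cook&Hughes1}, I would decompose the Fourier multiplier of the Birch--Magyar average as
\begin{equation*}
\widehat{A_\lambda^\Rr f}(\xi) = \bigl(M_\lambda^\Rr(\xi) + E_\lambda^\Rr(\xi)\bigr)\widehat{f}(\xi),
\end{equation*}
where $M_\lambda^\Rr$ is the \emph{major-arc} symbol localised near rationals $a/q$ with $q\le \lambda^{\eta_\Rr}$ --- built from the continuous Birch--Magyar multiplier weighted by normalized Gauss sums $S(a,q)$ --- and $E_\lambda^\Rr$ is a \emph{minor-arc} error satisfying the uniform decay $\|E_\lambda^\Rr * f\|_{\ell^2(\Z^n)} \lesssim \lambda^{-\delta}\|f\|_{\ell^2(\Z^n)}$ for some $\delta=\delta(c_\Rr, d)>0$ dictated by the Birch rank of $\Rr$.

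Second, I would establish the $\ell^2$-bound for $V_r$ with $r>2$. For the error piece, the lacunary structure $\lambda_l \ge c^l$ together with the trivial inequality $V_r(\{a_l\})\le 2\bigl(\sum_l |a_l|^r\bigr)^{1/r}$ and Minkowski's inequality gives
\begin{equation*}
\|V_r(\{E_{\lambda_l}^\Rr *f\})\|_{\ell^2}\le 2\Bigl(\sum_l \|E_{\lambda_l}^\Rr *f\|_{\ell^2}^r\Bigr)^{1/r}\lesssim \Bigl(\sum_l c^{-l\delta r}\Bigr)^{1/r}\|f\|_{\ell^2}\lesssim \|f\|_{\ell^2}.
\end{equation*}
For the main term, I would transfer the continuous $r$-variational estimate for averages over the surface $\{\Rr=1\}$ (in the spirit of Jones--Seeger--Wright \cite{variationalinequalitycontinuous}) to the discrete setting via a Magyar--Stein--Wainger argument, absorbing the Gauss sum factors into an $\ell^2$-bounded arithmetic multiplier.

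Third, I would interpolate the $\ell^2$-bound on $V_r$ (for any $r>2$) with the Cook--Hughes $\ell^p$-bound for the lacunary maximal operator $A_{lac}^\Rr$, valid for $p>\frac{2c_\Rr-2}{2c_\Rr-3}$ and trivially dominating $V_\infty$. A vector-valued Riesz--Thorin-type interpolation on the variation seminorms then produces the target estimate $\|V_r(\mathcal{A}_\mathbb{L}^\Rr f)\|_{\ell^p(\Z^n)}\lesssim \|f\|_{\ell^p(\Z^n)}$ precisely in the regime $r>\max\{p,p'\}$ with $p$ in the stated range.

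The main obstacle will be the $\ell^2$ variational estimate for the main term: handling the interaction between the arithmetic Gauss-sum weights and the continuous Birch--Magyar multiplier requires a careful dyadic decomposition in the denominator $q$, with enough quantitative decay at each dyadic scale to sum across scales while still matching the continuous Jones--Seeger--Wright bound.
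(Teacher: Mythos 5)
Your outer skeleton matches the paper's: reduce to $\ell^{2}$ by interpolating an $\ell^{2}$ bound for $V_{r}$ ($r>2$) against the Cook--Hughes $\ell^{p}$ bound for $A_{lac}^{\Rr}$ (which dominates $V_{\infty}$ pointwise), and the minor-arc piece is handled correctly --- Minkowski's inequality for $r\ge 2$ plus the $\lambda^{-\delta}$ decay and lacunarity do make the error summable. The interpolation reduction is exactly what the paper does.

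The $\ell^{2}$ step, however, is where your route diverges from the paper's and where the real work is being deferred. You decompose the \emph{multiplier} into major and minor arcs at threshold $q\le\lambda^{\eta_{\Rr}}$ and propose to treat the major-arc variation by transference from a continuous Jones--Seeger--Wright estimate. The paper instead decomposes the \emph{input function} $f=f_{1}+f_{2}+f_{3}$ using Magyar's $\ell^{2}$ Littlewood--Paley theory (the $\psi_{s,J}$, $\triangle\Psi_{l,j}$ machinery), proves the jump inequality rather than the variational bound directly, handles $f_{2},f_{3}$ (Fourier support away from the rationals $\Omega_{j,l}$) by a \emph{single-average} $\ell^{2}$ decay estimate (\Cref{prop:local l2 estimate}) summed with \Cref{prop:Magyarl2bound square fn b}, and handles the low-frequency piece $f_{1}$ by comparing $(\Psi_{l,0}*w_{2^{l},\Rr})*f$ against the dyadic martingale averages $E_{l}f$, after which the known martingale jump inequality closes the argument. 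No continuous variational input is invoked.

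Two concrete issues with your major-arc step as stated. First, Jones--Seeger--Wright treat Euclidean spherical means; you need a long $r$-variational $L^{2}$ estimate for the family $f\mapsto f*\sigma_{\Rr,\lambda}$ with $d\sigma_{\Rr}$ the surface measure on $\{\Rr=1\}$. This is believable given the Fourier decay $|\widehat{d\sigma_{\Rr}}(\xi)|\lesssim(1+|\xi|)^{-(c_{\Rr}-1)}$, but it is not an off-the-shelf citation and must be proved. Second, and more seriously, the phrase ``absorbing the Gauss sum factors into an $\ell^{2}$-bounded arithmetic multiplier'' conceals the crux: the major-arc symbol is a sum over $q$ of products $F_{q}(a,b)\,\zeta_{q}(\xi-b/q)\,\widehat{d\sigma_{\Rr}}(\lambda^{1/d}(\xi-b/q))$, and the $\lambda$-dependence sits inside the continuous factor \emph{after} the arithmetic factors have already been applied at each fixed $q$. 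A vector-valued Magyar--Stein--Wainger transference compatible with the variation seminorm (not just a single $\ell^{2}$ operator norm) is needed, together with enough $q$-decay from $|F_{q}(a,b)|\lesssim q^{-c_{\Rr}}$ to sum; you flag this yourself as the main obstacle, but it is precisely the content of the theorem, and without it the proposal is a plan rather than a proof. The paper's martingale-comparison route for $f_{1}$ is exactly designed to sidestep this: it replaces the continuous variational input and the vector-valued transference by elementary kernel estimates for $\Psi_{l,0}*w_{2^{l},\Rr}-E_{l}$ across dyadic scales $|l-m|$, plus the classical martingale jump bound.
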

	The range of $r$ in the above theorem is sharp for $p=2$, see for instance \cite{variationalinequalitycontinuous}.
	In view of the pointwise inequality $V_{\infty}(\mathcal{A}_\mathbb{L}^{\Rr}f)(x)\leq 2A_{lac}^{\mathfrak{R}}f(x)$, the $\ell^p-$boundedness of  $A_{lac}^{\mathfrak{R}}$ (\cite[Theorem 1]{Cook&Hughes1}) for $p>\frac{2c_{\mathfrak{R}}-2}{2c_{\mathfrak{R}}-3}$ and interpolation, it is enough to prove \Cref{thm:varBM} for $p=2$. Moreover, to prove the inequality \eqref{mainineq:Vr} for $p=2$, we will rely on the jump inequalities corresponding to the family $\mathcal{A}_\mathbb{L}^{\Rr}$. In that regard, we define the $\lambda$-jump corresponding to the family $\mathcal{A}_\mathbb{L}^{\Rr}$, denoted by $\mathcal{J}_{\lambda}(\mathcal{A}_\mathbb{L}^{\Rr})$, as the supremum of all integers $M\in\N$ for which there is an increasing sequence $0<u_1<v_1\leq u_2<v_2\leq\cdots\leq u_M<v_M$ with $u_i,v_i\in\mathbb{L}$ such that
    \[|A_{u_l}^{\Rr}f(x)-A_{v_l}^{\Rr}f(x)|>\lambda\] 
   holds for each $l=1,\dots,M$. We refer the readers to \cite{variationalinequalitycontinuous} for various properties involving $\lambda-$jumps. The following lemma states that the $\ell^2-$jump inequality implies the $\ell^2-$boundedness of the $r-$variation operator for $r>2$. The following lemma is the discrete analogue of \cite[Lemma 2.1]{variationalinequalitycontinuous}. Since the proof of this lemma is exactly the same as that of \cite[Lemma 2.1]{variationalinequalitycontinuous}, we skip the details. 
    \begin{lemma}\cite[Lemma 2.1]{variationalinequalitycontinuous}\label	{thm:jump implies variation b}
    	Let $\mathcal{T}$ be a family of linear operators that maps $\ell^2(\Z^n)$ to the set of measurable functions on $\Z^n$. Suppose
    	\begin{align*}
        	\sup_{\lambda>0}\|\lambda(\mathcal{J}_{\lambda}(\mathcal{T}f))^{1/2}\|_{\ell^2}&\lesssim\|f\|_{\ell^2}
    	\end{align*}
    	holds for all $f\in\ell^2(\Z^n)$. Then we have for $r>2$,
    	\begin{align*}
        	\|V_{r}(\mathcal{T}f)\|_{\ell^2}&\lesssim\|f\|_{\ell^2},
    	\end{align*}
    	for all $f\in\ell^2(\Z^n)$.
    \end{lemma}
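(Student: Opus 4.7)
The plan is to mimic the proof of \cite[Lemma 2.1]{variationalinequalitycontinuous} in the discrete setting; the adaptations are purely notational, as the jump-to-variation passage is an abstract fact about sequences of linear operators and their pointwise outputs.

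First, I would establish a pointwise dyadic comparison between $V_r$ and the $\lambda$-jump count. Given $x\in\Z^n$ and any finite increasing sequence $\lambda_1<\dots<\lambda_L$ in the indexing set, partition the differences $a_l:=|\mathcal{T}_{\lambda_{l+1}}f(x)-\mathcal{T}_{\lambda_l}f(x)|$ dyadically. Because the pairs $(\lambda_l,\lambda_{l+1})$ for $l$ in a fixed annulus $\{l:2^k<a_l\leq 2^{k+1}\}$ constitute an admissible configuration in the definition of $\mathcal{J}_{2^k}(\mathcal{T}f)(x)$, the count in each annulus is bounded by $\mathcal{J}_{2^k}(\mathcal{T}f)(x)$. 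Summing and taking the supremum over sequences yields
\[
V_r(\mathcal{T}f)(x)^r \,\leq\, 2^r\sum_{k\in\Z}2^{kr}\mathcal{J}_{2^k}(\mathcal{T}f)(x).
\]

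Second, I would take $\ell^2(\Z^n)$-norms on both sides and invoke the hypothesis. A naive scale-by-scale application gives $\int\mathcal{J}_{2^k}(\mathcal{T}f)\,dx\lesssim 2^{-2k}\|f\|_{\ell^2}^2$ uniformly in $k$, which upon multiplication by $2^{kr}$ produces a divergent sum in $k$; some additional structure is therefore needed. The standard remedy is a real-interpolation argument: one combines the uniform $\ell^2$-bound on $\lambda\mathcal{J}_\lambda^{1/2}$ (the hypothesis) with the weak-type $\ell^2$-bound on $V_\infty(\mathcal{T}f)$ that follows immediately from it (since $V_\infty(\mathcal{T}f)(x)>\lambda$ forces $\mathcal{J}_\lambda(\mathcal{T}f)(x)\geq 1$, so Chebyshev gives $|\{V_\infty(\mathcal{T}f)>\lambda\}|\lesssim\lambda^{-2}\|f\|_{\ell^2}^2$). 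Interpolating between these two estimates — uniform-in-$\lambda$ $\ell^2$ control of the jump count and weak $\ell^2$ control of the oscillation $V_\infty$ — recovers the strong $\ell^2$-inequality for $V_r$ in the range $r>2$. The slack $r>2$ is precisely what permits the divergent dyadic factor $\sum_k 2^{k(r-2)}$ to be absorbed.

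The main obstacle is this interpolation step, namely transferring the uniform-in-$\lambda$ hypothesis into strong $\ell^2$-control of the full $r$-variation. The condition $r>2$ is essential — the endpoint $r=2$ is known to fail in general, since $\ell^2$-boundedness of $V_2$ is strictly stronger than the jump inequality. Since the implementation is identical to \cite[Lemma 2.1]{variationalinequalitycontinuous}, one may simply quote that result; the rest of the manuscript only uses this lemma as a black box to reduce the variational estimate at $p=2$ to the $\ell^2$-jump inequality, which is then established separately for the Birch--Magyar averages.
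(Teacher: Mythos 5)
The paper itself gives no proof of this lemma: it states ``Since the proof of this lemma is exactly the same as that of \cite[Lemma 2.1]{variationalinequalitycontinuous}, we skip the details.'' Your proposal ends in the same place, so in that sense you have matched the paper's (non-)proof.

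That said, the sketch you offer en route is imprecise in a way worth flagging. Your first step — the pointwise dyadic bound $V_r(\mathcal{T}f)(x)^r\lesssim\sum_{k\in\Z}2^{kr}\mathcal{J}_{2^k}(\mathcal{T}f)(x)$ and the observation that scale-by-scale summation produces a divergent factor $\sum_k 2^{k(r-2)}$ — is correct and is indeed the entry point of the Jones--Seeger--Wright argument. But the second step, where you assert that one ``interpolates'' between the uniform-in-$\lambda$ $\ell^2$ jump bound and the weak $\ell^{2}$ bound on $V_\infty$, does not as written constitute an argument: the weak $V_\infty$ bound is itself a one-line consequence of the jump hypothesis, so there is no second independent endpoint, and there is no linear operator or compatible couple of spaces over which ``real interpolation'' could be invoked in the standard sense. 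The actual mechanism in \cite[Lemma 2.1]{variationalinequalitycontinuous} is a direct distributional estimate: one shows that the geometric sum $\sum_{2^k\leq V_\infty}2^{k(r-2)}$ converges \emph{because} $N_\lambda=0$ for $\lambda>V_\infty$ and $V_\infty\leq\sup_\lambda\lambda\,\mathcal{J}_\lambda^{1/2}$ (both using that $\mathcal{J}_\lambda$ is a nonnegative integer), which yields a pointwise domination of $V_r$ by the jump quantity; this is then combined with a careful level-set decomposition to pass from $\sup_\lambda\|\cdot\|_{\ell^2}$ in the hypothesis to the $\ell^2$ norm of the variation. The $r>2$ condition enters precisely in making that geometric sum converge, as you correctly intuit. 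Since the paper (correctly) treats the lemma as a black box, this imprecision has no downstream effect, but if you want a self-contained account rather than a citation, the dyadic-bound-plus-interpolation sketch you give would need to be replaced by the actual distributional argument.
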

    Therefore, \Cref{thm:varBM} will be a consequence of the following result.
	
    \begin{theorem}\label{thm:jumpBM}
    	Let $\mathbb{L}\subset\Lambda$ be a lacunary sequence. Then, we have
        \begin{align*}
            \sup_{\lambda>0}\|\lambda(\mathcal{J}_{\lambda}(\mathcal{A}_\mathbb{L}^{\mathfrak{R}}f))^{1/2}\|_{\ell^2}&\lesssim\|f\|_{\ell^2},
        \end{align*}
		for all $f\in\ell^2(\Z^n)$.
    \end{theorem}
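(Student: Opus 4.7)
The strategy is the Hardy--Littlewood circle method in the style of Birch--Magyar, combined with the standard jump-inequality framework. I would first decompose $A_\lambda^{\mathfrak{R}} = M_\lambda + E_\lambda$, where $M_\lambda$ is the major-arc ``main term'' whose Fourier multiplier is a sum over rationals $a/q$ of bounded denominator,
\[ \widehat{M_\lambda f}(\xi) = \sum_{q\leq Q(\lambda)} \sum_{\substack{a\in(\Z/q)^{n}\\ \gcd(a,q)=1}} G(a,q)\, \chi(\xi - a/q)\, \widehat{K^{\mathrm{c}}_\lambda}(\xi - a/q)\, \widehat{f}(\xi), \]
with $G(a,q)$ a Gauss-type sum, $\chi$ a smooth cutoff, and $K^{\mathrm{c}}_\lambda$ the continuous averaging kernel on $\{\mathfrak{R}=\lambda\}$; while $E_\lambda$ is the minor-arc error with the operator bound $\|E_\lambda\|_{\ell^2\to\ell^2} \lesssim \lambda^{-\eta_{\mathfrak{R}}}$, exactly the estimate underlying the Cook--Hughes lacunary maximal bound on $\ell^2$.

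For the error piece, combine the pointwise inequality $\lambda\, \mathcal{J}_\lambda(\{g_l\})^{1/2} \leq V_2(\{g_l\})$ with the elementary bound $V_2(\{g_l\})^2 \leq 4\sum_l |g_l|^2$ (which follows since each $g_l$ contributes to at most two differences). By Plancherel,
\[ \Big\| \lambda\, \mathcal{J}_\lambda\big(\{E_{\lambda_l} f\}\big)^{1/2} \Big\|_{\ell^2}^2 \leq 4 \sum_l \|E_{\lambda_l} f\|_{\ell^2}^2 \lesssim \Big(\sum_l \lambda_l^{-2\eta_{\mathfrak{R}}}\Big) \|f\|_{\ell^2}^2 \lesssim \|f\|_{\ell^2}^2, \]
the last sum converging by lacunarity of $\mathbb{L}$ and positivity of $\eta_{\mathfrak{R}}$.

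For the main term I would then perform a dyadic decomposition $M_\lambda = \sum_{s\geq 0} M_\lambda^s$, with $M_\lambda^s$ restricting the outer sum to $2^s \leq q < 2^{s+1}$. For fixed $s$, I would invoke the Magyar--Stein--Wainger transference principle, exploiting the near-orthogonality of the Fourier pieces localized around distinct rationals $a/q$, to reduce the $\ell^2(\Z^n)$ jump inequality for $M_\lambda^s$ to the $L^2(\R^n)$ jump inequality for the continuous operator averaging against a smooth measure on $\mathfrak{R}^{-1}(\lambda)$. The continuous jump inequality along a lacunary sequence would in turn come from Birch's Fourier decay $|\widehat{K^{\mathrm{c}}_\lambda}(\xi) - c_0| \lesssim (\lambda^{1/d}|\xi|)^{-\delta}$ for some $\delta > 0$ (provided by $\phi$-regularity via stationary phase), combined with a Littlewood--Paley square-function argument on dyadic frequency annuli. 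The Weyl-type Gauss-sum bound $|G(a,q)| \lesssim q^{-\eta'}$ then contributes a geometric factor $2^{-cs}$, so the sum in $s$ converges.

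The main obstacle is the continuous jump inequality for general $\phi$-regular Birch forms --- more delicate than the classical spherical case handled by Jones--Seeger--Wright --- and the careful book-keeping needed to match the decay exponents coming from Birch's Fourier bound, the Gauss-sum estimate and the lacunarity parameter, so that the sum over $s$ closes with a constant independent of the jump threshold $\lambda$.
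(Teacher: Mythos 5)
Your treatment of the minor-arc error is fine and is the same mechanism the paper uses: dominate $\mu\,\mathcal{J}_\mu(\{g_l\})^{1/2}$ by the square function $\bigl(\sum_l |g_l|^2\bigr)^{1/2}$ and sum the operator norms $\lambda_l^{-\eta_{\mathfrak{R}}}$ over the lacunary sequence. Your overall architecture, however, is quite different from the paper's: the paper does not transfer anything to $\R^n$. It decomposes the \emph{input function} $f=f_1+f_2+f_3$ via Magyar's $\Psi_{l,j}$-decomposition, proves a single-scale $\ell^2$ gain (\Cref{prop:local l2 estimate}) for functions whose Fourier support avoids neighbourhoods of rationals with small denominator, controls the resulting square functions by \Cref{prop:Magyarl2bound square fn b}, and handles the genuinely low-frequency piece $f_1$ by a direct kernel comparison with the dyadic martingale $E_l f$ (\Cref{lem:main l2 estimate}) followed by L\'epingle's inequality for $\{E_lf\}$.

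The genuine gap is in your main-term step. Your major-arc multiplier is written with a weight $G(a,q)$ that does not depend on $\lambda$, but for Birch--Magyar averages the correct weight, per \eqref{main multiplier decomposition}, is $\sum_{a\in U_q}e_{q}(-\lambda a)F_q(a,b)$: it oscillates with $\lambda$ through $\lambda \bmod q$. This is harmless for the lacunary \emph{maximal} function (one bounds the $a$-sum by $q^{1-c_{\mathfrak{R}}}$ uniformly in $\lambda$ before taking the supremum, which is how Cook--Hughes proceed), but it is fatal for the transference step you propose for \emph{jumps}: the Magyar--Stein--Wainger machinery converts a continuous bound for the family $\{m_\lambda\}$ into a discrete bound for $\sum_{b/q}G(b/q)m_\lambda(\cdot-b/q)$ only when the arithmetic weight is independent of the family parameter, since an oscillating unimodular prefactor in front of a non-decaying quantity manufactures jumps. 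Concretely, take $\hat f$ supported within $\epsilon$ of a fixed rational $b_0/q_0$ with $q_0\geq 2$. There are about $\log(1/\epsilon)$ lacunary scales with $\lambda_l^{1/d}\lesssim 1/\epsilon$, and at those scales $\mathscr{F}_{\R^n}(d\sigma_{\mathfrak{R}})\bigl(\lambda_l^{1/d}(\xi-b_0/q_0)\bigr)\approx\mathscr{F}_{\R^n}(d\sigma_{\mathfrak{R}})(0)\neq 0$, so the major-arc operator acts essentially as multiplication by the arithmetic factor $\sum_{q_0\mid q}\sum_{a\in U_q}e_q(-\lambda_l a)F_q(a,qb_0/q_0)$. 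Your continuous input $|\widehat{K^{\mathrm{c}}_\lambda}(\xi)-c_0|\lesssim(\lambda^{1/d}|\xi|)^{-\delta}$ is vacuous in this regime, the transferred continuous jump inequality never sees the $\lambda$-dependence of the weight, and the crude square-function bound costs a factor $\sqrt{\log(1/\epsilon)}$ over these scales. So the reduction ``discrete jump inequality for $M_\lambda^{s}$ $\Rightarrow$ continuous jump inequality'' is not available as stated; any correct argument must control the variation in $\lambda$ of these arithmetic factors along $\mathbb{L}$. This is precisely what distinguishes the Birch--Magyar averages (where $e_q(-\lambda a)$ appears) from the averages over $\{\mathfrak{P}=0\}$ in \Cref{thm:jumpvariety} (where it does not), and it is the one point your sketch does not address; note that the same low-frequency, near-rational regime is also the delicate point in the paper's own argument, where it is the role of \Cref{prop:Magyarl2bound square fn b} to control how many scales $l$ can load onto a single frequency.
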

	
	The proof of \Cref{thm:jumpBM} is based on the ideas from \cite{variationalinequalitycontinuous}, where the jump inequalities were obtained for the family of continuous operators. However, the case of discrete Birch-Magyar averages requires additional ingredients. The first is a decomposition of the input function from \cite{Magyarl2bound} which serves as the discrete analogue of Littlewood-Paley theory for $\ell^2-$functions. Secondly, we require the key decomposition of the Birch-Magyar averages using the Hardy-Littlewood circle method and the estimates of the error term from \cites{Cook&Hughes1, Magyar2}.
	\subsection{Averaging operators over certain algebraic varieties:} Let $\mathfrak{P}$ be an integral form in $n$ variables of degree $d>1$. We say $\mathfrak{P}$ is regular if the following holds
	\begin{enumerate}[i)]
		\item The equation $\mathfrak{P}(x)=0$ has a non-singular solution in every $p-$adic completion of $\Q$ (including $\Q_\infty=\R$).
		\item $\mathcal{B}(\mathfrak{P})>(d-1)2^d$, where $\mathcal{B}(\mathfrak{P})$ is the Birch rank of $\mathfrak{P}$.
	\end{enumerate} 
	For a function $f\in\ell^p(\Z^n)$, we define the averages over the algebraic variety $\mathfrak{P}(x)=0$ by
	 \[M_{\lambda}^{\mathfrak{P}}f(x)=\frac{1}{r_{\mathfrak{P}}(\lambda)}\sum_{y\in [\lambda]^{n}:\mathfrak{P}(y)=0}f(x-y),\] 
	where $r_{\mathfrak{P}}(\lambda)=\left(\sum\limits_{y\in [\lambda]^{n};\mathfrak{P}(y)=0}1\right) \approx \lambda^{n-d}$. Cook \cite{Cook1} showed that the maximal operator $M_*^\mathfrak{P}f(x)=\sup_{\lambda\in\N}|M_\lambda^\mathfrak{P}f(x)|$ is bounded on $\ell^p(\Z^n)$ for all $p>1$. 
	In our next result, we discuss the $r-$variation associated to the family of operators $\mathcal{M}_\mathbb{L}^{\mathfrak{R}}=\{M_{\lambda}^\mathfrak{P}:\lambda \in \mathbb{L}\}$, where $\mathbb{L}\subset\N$ is a lacunary sequence. More precisely, we prove the following.
	\begin{theorem}\label{thm:variation for R(x)=0}
		Let $\mathfrak{P}$ be a regular form in $n$ variables of degree $d>1$. Then for $p>1$ and $r>max\{p,p'\}$, we have
		\[\|V_{r}(\mathcal{M}_\mathbb{L}^{\mathfrak{R}}f)\|_{\ell^p(\Z^n)}\lesssim \|f\|_{\ell^p(\Z^n)}.\]
	\end{theorem}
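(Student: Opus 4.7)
The plan is to mirror the reduction used for Theorem~\ref{thm:varBM}. The pointwise bound $V_\infty(\mathcal{M}_\mathbb{L}^{\mathfrak{P}}f)(x)\leq 2M_{\ast}^{\mathfrak{P}}f(x)$, together with Cook's \cite{Cook1} $\ell^p$-boundedness of $M_{\ast}^{\mathfrak{P}}$ on $\ell^p(\Z^n)$ for all $p>1$, immediately yields the $r=\infty$ variant on the entire range $p>1$. Interpolating that endpoint against an $\ell^2$-bound for $V_r$, $r>2$, will then give the full theorem for $r>\max\{p,p'\}$ and $p>1$. Consequently, the task reduces to proving
\begin{equation*}
\|V_r(\mathcal{M}_\mathbb{L}^{\mathfrak{P}} f)\|_{\ell^2(\Z^n)} \lesssim \|f\|_{\ell^2(\Z^n)}, \qquad r>2.
\end{equation*}

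By Lemma~\ref{thm:jump implies variation b} applied to the family $\mathcal{M}_\mathbb{L}^{\mathfrak{P}}$, this $\ell^2$-variational estimate is in turn implied by the $\ell^2$-jump inequality
\begin{equation*}
\sup_{\lambda>0}\bigl\|\lambda\bigl(\mathcal{J}_\lambda(\mathcal{M}_\mathbb{L}^{\mathfrak{P}} f)\bigr)^{1/2}\bigr\|_{\ell^2(\Z^n)} \lesssim \|f\|_{\ell^2(\Z^n)},
\end{equation*}
so it suffices to establish this jump inequality.

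To do so, I would follow the scheme used to prove Theorem~\ref{thm:jumpBM}, substituting the circle-method analysis of the multiplier of $M_\lambda^{\mathfrak{P}}$ developed by Cook~\cite{Cook1} in place of that of $A_\lambda^{\mathfrak{R}}$ used in \cite{Cook&Hughes1, Magyar2}. Two ingredients enter: (a) the discrete Littlewood--Paley type decomposition of $\ell^2(\Z^n)$ due to Magyar~\cite{Magyarl2bound}, which orthogonally localises frequencies according to denominator scales in the rational approximation; and (b) the Hardy--Littlewood circle method decomposition of the multiplier of $M_\lambda^{\mathfrak{P}}$, expressing it as a main term supported on the major arcs (modelled by a continuous averaging operator over $\mathfrak{P}=0$ twisted by Gauss-sum weights) plus an error term with polynomial decay $\lambda^{-\eta}$, whose existence is guaranteed by the regularity hypothesis $\mathcal{B}(\mathfrak{P})>(d-1)2^d$. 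On each Magyar piece, the main term is transferred to a continuous $\ell^2$-jump estimate of Jones--Seeger--Wright type \cite{variationalinequalitycontinuous}, and these are summed orthogonally across pieces; the error term is absorbed using its polynomial decay together with the trivial inequality $\lambda^2\mathcal{J}_\lambda \leq V_\infty^2$.

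The main obstacle, exactly as in the Birch--Magyar case, will be the careful execution of the transference step: one has to establish the continuous $\ell^2$-jump bound uniformly for the Gauss-sum weighted symbols on each major arc and then sum across Magyar pieces without logarithmic loss, exploiting the lacunarity of $\mathbb{L}$. Once that machinery is in place (following the template provided by the proof of Theorem~\ref{thm:jumpBM}), the error term is straightforward thanks to the polynomial gain from the minor arc analysis. The full range $p>1$ in the present theorem, in contrast to the restricted range in Theorem~\ref{thm:varBM}, is a direct consequence of the fact that Cook's strong maximal bound is available on $\ell^p$ for every $p>1$.
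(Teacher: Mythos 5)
Your proposal follows the same chain of reductions as the paper: dominate $V_\infty$ by Cook's maximal operator to get the $\ell^p$ endpoint for all $p>1$, interpolate against the $\ell^2$ variational bound, reduce the latter to the $\ell^2$ jump inequality via Lemma~\ref{thm:jump implies variation b}, and prove that jump inequality by running the Theorem~\ref{thm:jumpBM} scheme with Cook's circle-method multiplier decomposition and Magyar's $\ell^2$ frequency decomposition in place of the Birch--Magyar versions. One small imprecision worth flagging: you describe the main-arc piece as being ``transferred to a continuous $\ell^2$-jump estimate of Jones--Seeger--Wright type.'' In the paper's actual execution the low-frequency piece $f_1$ is not handled by transferring to a continuous symbol estimate; rather, $(\Psi_{l,0}*w_{2^l,\mathfrak{R}})*f$ is compared to the dyadic conditional expectation $E_l f$, the jump inequality for the martingale $\{E_l f\}$ (which is the piece genuinely borrowed from Jones--Seeger--Wright) is invoked, and the discrepancy is controlled by a square-function estimate resting on the kernel comparison in Lemma~\ref{lem:main l2 estimate}. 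Since you explicitly defer to ``the template provided by the proof of Theorem~\ref{thm:jumpBM},'' this discrepancy in the high-level description does not constitute a gap, but be aware that the mechanism is a martingale comparison, not a direct transference of Gauss-sum weighted symbols to the continuous setting.
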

	By the pointwise comparison $V_{\infty}(\mathcal{M}_\mathbb{L}^{\Rr}f)(x)\leq 2M_{*}^{\mathfrak{R}}f(x)$, and the $\ell^p-$boundedness \cite[Theorem 1]{Cook1} of the operator $M_{*}^{\mathfrak{R}}$ for $p>1$, it is enough to prove \Cref{thm:variation for R(x)=0} for $p=2$. Moreover, \Cref{thm:variation for R(x)=0} for $p=2$ will be a consequence of the following jump inequality by an application of \Cref{thm:jump implies variation b}.
	\begin{theorem}\label{thm:jumpvariety}
		Let $\mathfrak{R}(x)$ be a regular form in $n$ variables of degree $d>1$. Then for $r>2$ and $f\in\ell^2(\Z^n)$, we have
		\[\sup_{\lambda>0}\|\lambda(\mathcal{J}_{\lambda}(\mathcal{M}_\mathbb{L}^{\mathfrak{R}}f))^{1/2}\|_{\ell^2}
        \lesssim \|f\|_{\ell^2}.\]
	\end{theorem}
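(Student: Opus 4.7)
The plan is to follow the strategy used for \Cref{thm:jumpBM}: work on the Fourier side and decompose the multiplier of $M_\lambda^{\mathfrak{R}}$ via the Hardy--Littlewood circle method into a main (major-arc) term and an error term, then handle each separately for the $\ell^2$-jump seminorm. Writing $M_\lambda^{\mathfrak{R}} f = f \ast K_\lambda$ and using the identity $\mathbf{1}_{\mathfrak{R}(y)=0} = \int_0^1 e^{2\pi i \alpha \mathfrak{R}(y)}\,d\alpha$, the multiplier $\widehat{K_\lambda}(\xi)$ becomes an $\alpha$-integral of exponential sums over the box $[\lambda]^n$, which I would split into major arcs of width $\lambda^{-d+\eta}$ around rationals $a/q$ with $q \leq \lambda^\eta$ and the complementary minor arcs. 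The hypothesis $\mathcal{B}(\mathfrak{R}) > (d-1)2^d$, combined with Birch's Weyl-type bounds as used in \cite{Cook1}, then yields a decomposition $\widehat{K_\lambda} = m_\lambda + E_\lambda$, in which the main term has the familiar product form
\[m_\lambda(\xi) = \sum_{q \leq \lambda^\eta}\;\sum_{\substack{1\leq a\leq q \\ (a,q)=1}} G(a/q)\,\Psi_\lambda(\xi - a/q),\]
built from normalized Gauss sums $G(a/q)$ attached to the variety $\mathfrak{R}=0$ and a smooth major-arc bump $\Psi_\lambda$ approximating a rescaled continuous Euclidean average over the real variety $\{\mathfrak{R}=0\}\cap[-1,1]^n$.

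For the error term, the minor-arc bounds of \cite{Cook1} give $\|\mathrm{Op}(E_\lambda)\|_{\ell^2 \to \ell^2} \lesssim \lambda^{-\delta}$ for some $\delta=\delta(\mathfrak{R})>0$. Since the $\lambda$-jump is pointwise dominated by the $V_2$-variation, which in turn is dominated by the $V_1$-variation, I would estimate
\[\Bigl\|\lambda\bigl(\mathcal{J}_\lambda(\{\mathrm{Op}(E_\mu)f\}_{\mu\in\mathbb{L}})\bigr)^{1/2}\Bigr\|_{\ell^2} \leq \sum_{l\geq 1} \|\mathrm{Op}(E_{\lambda_{l+1}})-\mathrm{Op}(E_{\lambda_l})\|_{\ell^2\to\ell^2}\,\|f\|_{\ell^2} \lesssim \sum_{l\geq 1} \lambda_l^{-\delta}\,\|f\|_{\ell^2} \lesssim \|f\|_{\ell^2},\]
where the last step uses that $\mathbb{L}=\{\lambda_l\}_{l\geq 1}$ is lacunary so that $\sum_l \lambda_l^{-\delta}<\infty$.

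The main term is the heart of the proof. I would split the $q$-sum dyadically, $q\sim 2^s$, and for each dyadic block express the corresponding operator as a composition of an arithmetic multiplier $\mathsf{A}_s$ (controlled by Gauss-sum decay $|G(a/q)|\lesssim q^{-\sigma}$, $\sigma>0$, which is ensured by the Birch rank hypothesis) and a continuous-type operator $\mathcal{L}_\lambda^{(s)}$ obtained by transferring a convolution with a smooth scaled Euclidean variety average. The jump inequality of Jones, Seeger and Wright \cite{variationalinequalitycontinuous} for continuous spherical-type averages adapts to $\mathcal{L}_\lambda^{(s)}$ and supplies the $\ell^2$-jump bound uniformly in $s$, while the arithmetic factor contributes a convergent geometric sum $\sum_s 2^{-\sigma s}<\infty$ via the sampling/Magyar transference principle.

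The main obstacle, as in the proof of \Cref{thm:jumpBM}, will be reconciling the dyadic $q$-decomposition with the nonlinear jump seminorm: unlike linear square-function bounds, $\|\lambda\,\mathcal{J}_\lambda^{1/2}\|_{\ell^2}$ does not split cleanly across a decomposition into $q$-blocks, so one must either dominate a piece of the main term by a square function in the spirit of Bourgain before invoking the continuous jump bound, or argue inside a fixed $(a,q)$-block through transference and only then sum. A mild simplification compared to the Birch--Magyar setting is that here the variety $\{\mathfrak{R}=0\}$ is independent of $\lambda$, so a single continuous approximant is only rescaled across $\lambda\in\mathbb{L}$, which should streamline the application of \cite{variationalinequalitycontinuous}.
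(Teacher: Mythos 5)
Your overall architecture --- a circle-method decomposition of the multiplier of $M_{\lambda}^{\mathfrak{P}}$ into a major-arc main term and a minor-arc error, $\ell^2$-summation of the error over the lacunary sequence, and transference of the continuous jump inequality of \cite{variationalinequalitycontinuous} to the main term --- is a legitimate alternative to the route taken in the paper, and your treatment of the error term is correct: the operator-norm decay $\lambda^{-\delta}$ together with lacunarity of $\mathbb{L}$ makes the $V_1$-type sum converge. However, there is a genuine gap exactly where you flag the ``main obstacle'': you never resolve how to sum the nonlinear jump seminorm $\|\lambda\,\mathcal{J}_{\lambda}(\cdot)^{1/2}\|_{\ell^2}$ over the dyadic $q$-blocks of the main term. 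Since those blocks carry the entire arithmetic content of the problem, deferring this step leaves the heart of the proof missing.

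The paper closes this gap by a different organization, which is worth comparing with your plan. Rather than decomposing the multiplier, it decomposes the input function $f=f_1+f_2+f_3$ following Magyar's $\ell^2$ scheme, with $f_2,f_3$ frequency-supported off the major-arc sets $\Omega_{j,l}$ and $f_1$ a low-frequency piece. For $f_2,f_3$ no jump inequality is transferred at all: one uses the crude pointwise domination \eqref{square fn domination b} of $\lambda\,\mathcal{J}_{\lambda}^{1/2}$ by the full square function $\bigl(\sum_{l}|M_{\lambda_l}^{\mathfrak{P}}f_i|^2\bigr)^{1/2}$, which is admissible because the single-scale $\ell^2$ operator norms carry the geometric gain $C_{j,l}=\max\{2^{-j(c_{\mathfrak{R}}-2)},2^{-ld\eta_{\mathfrak{R}}}\}$ (this is where the Birch-rank hypothesis and the Weyl-sum decay $|F_q(a,b)|\lesssim q^{-c_{\mathfrak{R}}}$ are spent), and the resulting double sum converges by the almost-orthogonality of \Cref{prop:Magyarl2bound square fn b}. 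A genuine jump inequality is then needed only for the single remaining piece $f_1$, and there the paper compares $(\Psi_{l,0}*w_{2^l,\mathfrak{R}})*f$ with the dyadic martingale averages $E_l f$ --- whose jump inequality is classical --- and shows the difference is again a square function with off-diagonal decay (\Cref{lem:main l2 estimate}). If you wish to keep your transference route, you must actually carry out one of the two fixes you mention; the natural one is precisely this hybrid: reserve the transferred (or martingale) jump inequality for the $q=1$ block alone and dominate every block with larger $q$ by a square function using the $q^{-\sigma}$ decay. As written, your proposal identifies this step but does not perform it.
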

	The proof of \Cref{thm:jumpvariety} follows by similar arguments as that of \Cref{thm:jumpBM}. We provide a brief sketch of proof, indicating the necessary changes.

	\subsection{Variational inequalities for ergodic averages}
	In this section, we discuss the application of our main theorems in proving variational bounds for the ergodic Birch-Magyar averages, which are defined as follows. Let $(X, \mathcal{B}(X),\mu)$ be a $\sigma$-finite measure space and $T=(T_1,T_2,\dots,T_n)$ be a family of commuting, measure preserving and invertible transformations. For every $f\in L^1(X),$ we define the ergodic Birch-Magyar averages by
	\[\mathbb{A}_{\lambda}^{\Rr}f(x)=\frac{1}{r_{\Rr,\phi}(\lambda)}\sum_{y\in\Z^n:\Rr(y)=\lambda}\phi\left(\frac{y}{\lambda^{\frac{1}{d}}}\right)f\left(T_1^{y_1}\circ T_2^{y_2}\circ\cdots\circ T_n^{y_n}x\right),\]
	where $\Rr$ is a $\phi-$regular form.
	Such averages were considered by Magyar in \cite{Magyar2}, where an $L^2-$ergodic theorem was established for such averages.
	\begin{example}
		Let $X=\Z^n$, $\mu$ be the counting measure on $\Z^n$, and $T_i^{y_j}(x)=x-y_je_i$ for all $i,j\in\{1,2,\dots, n\}$ with $y\in\Z^n$ and $\{e_1,\cdots,e_n\}$ be the standard basis of $\Z^n$. Then, one can see that the ergodic Birch-Magyar average $\mathbb{A}_{\lambda}^{\Rr}f$ is exactly equal to $A_{\lambda}^{\Rr}f$ for all $\lambda\in\N$.
	\end{example} 
	For a lacunary sequence $\mathbb{L}\subset\Lambda$, let us denote the family $\{\mathbb{A}_{\lambda}^{\Rr}:\lambda\in \mathbb{L}\}$ by $\mathfrak{A}_\mathbb{L}^{\Rr}$. We establish  the following variational estimate for the family $\mathfrak{A}_\mathbb{L}^{\Rr}f=\{\mathbb{A}_{\lambda}^{\Rr}f:\lambda\in \mathbb{L}\}$.
	\begin{theorem}\label{thm:variation for ergodic BM}
		Let $\mathfrak{R}$ be a $\phi-$regular form in $n$ variables of degree $d>1$ and $\mathbb{L}\subset\Lambda$ be a lacunary sequence. Then for $r>max\{p,p'\}$ and $p>\frac{2c_{\mathfrak{R}}-2}{2c_{\mathfrak{R}}-3}$, we have
		\[\|V_{r}(\mathfrak{A}_\mathbb{L}^{\mathfrak{R}}f)\|_{L^p(X)}\lesssim \|f\|_{L^p(X)}.\]
	\end{theorem}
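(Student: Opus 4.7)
The plan is to deduce this ergodic variational estimate from its discrete counterpart (\Cref{thm:varBM}) by Calder\'on's transference principle. For $f\in L^p(X)\cap L^\infty(X)$ and each $x\in X$, introduce the orbit function
$$F_x(z):=f\bigl(T_1^{-z_1}T_2^{-z_2}\cdots T_n^{-z_n}x\bigr),\qquad z\in\Z^n.$$
A direct change of summation variable gives the commutation identity
$$\mathbb{A}_\lambda^{\Rr}f\bigl(T_1^{-z_1}\cdots T_n^{-z_n}x\bigr)=A_\lambda^{\Rr}F_x(z)$$
for every $\lambda\in\mathbb{L}$ and $z\in\Z^n$. Since the $r$-variation is a pointwise sublinear functional in the family of averages, the same identity persists for $V_r(\mathfrak{A}_\mathbb{L}^{\Rr}\cdot)$ and $V_r(\mathcal{A}_\mathbb{L}^{\Rr}\cdot)$ on the two sides.

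By monotone convergence it suffices to prove the bound after replacing $\mathbb{L}$ by an arbitrary finite truncation $\mathbb{L}_K=\{\lambda_1<\cdots<\lambda_K\}\subset\mathbb{L}$, with implicit constant independent of $K$. Fix such a truncation and set $R:=C\lambda_K^{1/d}$, where $C$ is chosen so that the convolution kernel defining $A_\lambda^{\Rr}$ is supported in $\{|y|\le C\lambda^{1/d}\}$ for all $\lambda\in\mathbb{L}_K$ (this follows from the cutoff $\phi(y/\lambda^{1/d})$ in the definition of $A_\lambda^{\Rr}$). For a large integer $N$, let $Q_N:=\{-N,\dots,N\}^n$ and define the localized orbit function $F_x^{(N)}:=F_x\cdot\mathbf{1}_{Q_{N+R}}$. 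The support condition guarantees $A_\lambda^{\Rr}F_x(z)=A_\lambda^{\Rr}F_x^{(N)}(z)$ for every $\lambda\in\mathbb{L}_K$ and $z\in Q_N$, so the corresponding $r$-variations agree on $Q_N$. Applying \Cref{thm:varBM} to $F_x^{(N)}$ (which lies in $\ell^p(\Z^n)$ for $\mu$-a.e.\ $x$ by Fubini) yields
$$\sum_{z\in Q_N}\bigl|V_r(\mathcal{A}_{\mathbb{L}_K}^{\Rr}F_x)(z)\bigr|^p\;\le\;C\sum_{z\in Q_{N+R}}|F_x(z)|^p.$$

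Integrating this inequality over $x\in X$ and using the measure-preserving property of each $T_i$, one obtains for every $z\in\Z^n$ the identities $\int_X|V_r(\mathcal{A}_{\mathbb{L}_K}^{\Rr}F_x)(z)|^p\,d\mu(x)=\|V_r(\mathfrak{A}_{\mathbb{L}_K}^{\Rr}f)\|_{L^p(X)}^p$ and $\int_X|F_x(z)|^p\,d\mu(x)=\|f\|_{L^p(X)}^p$. Summing the first over $Q_N$ and the second over $Q_{N+R}$ yields
$$|Q_N|\cdot\|V_r(\mathfrak{A}_{\mathbb{L}_K}^{\Rr}f)\|_{L^p(X)}^p\;\le\;C\,|Q_{N+R}|\cdot\|f\|_{L^p(X)}^p.$$
Dividing by $|Q_N|$ and letting $N\to\infty$ (so that $|Q_{N+R}|/|Q_N|\to 1$ since $R$ is fixed) produces the desired estimate for $\mathbb{L}_K$ with constant independent of $K$, and sending $K\to\infty$ followed by a density argument against the dense subclass $L^p\cap L^\infty$ completes the proof. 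The whole argument is routine once \Cref{thm:varBM} is established; the only point requiring care is that the localization radius $R$ grows with $\lambda_K$, which is precisely what forces the prior reduction to a finite truncation so that the boundary layer of the F\o lner cubes $Q_N$ becomes asymptotically negligible.
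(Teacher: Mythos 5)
Your proposal is correct and is essentially the paper's own argument: a Calder\'on-type transference of \Cref{thm:varBM} via the orbit function, the commutation identity $\mathbb{A}_\lambda^{\Rr}f(T^{-z}x)=A_\lambda^{\Rr}F_x(z)$, the measure-preserving property, and a F\o lner averaging over large cubes whose boundary layer is negligible (the paper keeps the full index set and restricts to $\lambda$ small relative to the cube with an auxiliary parameter $\eta$, whereas you truncate $\mathbb{L}$ first and send the cube size to infinity — the same mechanism in a different order of limits). The only cosmetic blemish is the closing ``density argument'' from $L^p\cap L^\infty$, which is unnecessary since your argument already applies verbatim to any $f\in L^p(X)$.
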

	The proof of this theorem will be a consequence of \Cref{thm:varBM} along with the transference principle stated below.
	\begin{theorem}\label{thm:variational transference principle}
		Let $D\subset\N$, $1\leq p<\infty$ and $r>2$. Suppose the following estimate
		\begin{align}
			\|V_{r}(\mathcal{A}_\lambda^{\mathfrak{R}}f:\;\lambda\in D)\|_{\ell^p(\Z^n)}
			&\lesssim \|f\|_{\ell^p(\Z^n)}\label{variation for BM}
		\end{align}
		holds for all $f\in\ell^p(\Z^n)$. Then for every $g\in L^p(X)$ we have the inequality 
		\begin{align}\label{variation for ergodic BM}
			\|V_{r}(\mathfrak{A}_\lambda^{\mathfrak{R}}f:\;\lambda\in D)\|_{L^p(X)}\lesssim \|f\|_{L^p(X)}.
		\end{align}
	\end{theorem}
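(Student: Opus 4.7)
The plan is to run the classical Calder\'on transference argument, applied to the variational seminorm. Two features of $V_r$ are crucial: its value at each point depends only pointwise on the family of averages, and it is monotone increasing in its index set. The first lets us transfer pointwise equalities of averages to pointwise equalities of variational norms, while the second lets us reduce to finite index sets and then recover the full result via monotone convergence.

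First I would fix $N\in\N$, set $D_N:=D\cap[1,N]$, and choose $K=K_N$ so large that $\operatorname{supp}\phi(\cdot/\lambda^{1/d})\subset B_K:=\{y\in\Z^n:|y|_\infty\leq K\}$ for every $\lambda\in D_N$. Writing $T^y:=T_1^{y_1}\cdots T_n^{y_n}$, for $M\gg K$ and $x\in X$ I would introduce the truncated orbit function $F_{x,M}(y):=f(T^{-y}x)\,\mathbf{1}_{B_M}(y)$. A short check shows that for every $y_0\in B_{M-K}$ and every $\lambda\in D_N$, each translate $y_0-y$ appearing in the sum defining $A_\lambda^{\mathfrak{R}}F_{x,M}(y_0)$ lies in $B_M$, so the indicator is invisible and one obtains the key pointwise identity
\[
A_\lambda^{\mathfrak{R}}F_{x,M}(y_0)=\mathbb{A}_\lambda^{\mathfrak{R}}f(T^{-y_0}x).
\]

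Next I would apply the discrete hypothesis \eqref{variation for BM} to $F_{x,M}$, restrict the resulting $\ell^p$-sum to $y_0\in B_{M-K}$, use the pointwise nature of $V_r$ together with the identity above to rewrite each summand as $V_r(\{\mathbb{A}_\lambda^{\mathfrak{R}}f\}_{\lambda\in D_N})(T^{-y_0}x)$, integrate over $x\in X$, and invoke the measure-preservation of each $T^{-y_0}$. This collapses the left side to $|B_{M-K}|\cdot\|V_r(\mathfrak{A}_\lambda^{\mathfrak{R}}f:\lambda\in D_N)\|_{L^p(X)}^p$ and the right side to $|B_M|\cdot\|f\|_{L^p(X)}^p$. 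Dividing by $|B_{M-K}|$ and sending $M\to\infty$ sends the boundary ratio $|B_M|/|B_{M-K}|$ to $1$ and produces \eqref{variation for ergodic BM} with $D$ replaced by $D_N$; monotone convergence as $N\to\infty$ concludes the proof. The argument is routine; the only mild subtlety is that $K$ must be chosen after $N$ so that a single indicator $\mathbf{1}_{B_M}$ simultaneously accommodates every average $A_\lambda^{\mathfrak{R}}$ with $\lambda\in D_N$, which is exactly why the monotonicity of $V_r$ in its index set is used to reduce to finite $D_N$ at the outset.
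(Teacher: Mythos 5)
Your argument is correct and is essentially the same Calder\'on-type transference the paper uses: truncate the orbit function, match the discrete and ergodic averages pointwise away from a boundary layer, apply the discrete hypothesis, integrate and use measure preservation, and let the buffer ratio tend to $1$. The only difference is organizational --- you decouple the index-set truncation ($D_N$, then $N\to\infty$ by monotone convergence) from the spatial truncation ($M\to\infty$), whereas the paper couples them through the parameters $\Lambda$ and $\eta$ --- and this changes nothing of substance.
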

	The proof of \Cref{thm:variational transference principle} is based on  the arguments used in \cite[Proposition 5.2]{DimensionfreeestimatesdiscreteHL}.
	\begin{remark}
	The ergodic analogue of \Cref{thm:variation for R(x)=0} can be established using a similar argument to that of \Cref{thm:variation for ergodic BM}. We leave the details to the reader.
	\end{remark} 

	\subsection{Notations}
	We use the following notation throughout the paper.
	\begin{enumerate}[i)]
		\item The notation $\mathscr{F}_{\Z^n}$ and $\mathscr{F}_{\R^n}$ are used to denote the Fourier transform on $\Z^{n}$ and $\R^n$ respectively. 
		\item The notation $M\lesssim N$ (or $N\gtrsim M$) means that there exists an absolute constant $0<C<\infty$ independent of the parameters on which $M$ and $N$ depend, such that $M\leq CN$ (or $M\geq CN$). Also, $M\approx N$ means that $M\lesssim N$ and $M\gtrsim N$.
		\item We denote $e(t)=e^{2\pi it}$, for $t\in\R$.
		\item The group $\mathbb{Z}_{q}$ denotes $\mathbb{Z}/q\mathbb{Z}$, and $U_{q}$ denotes the multiplicative group $\mathbb{Z}_{q}^{\times}$ with the understanding that $U_{1}=\mathbb{Z}_{1}=\{0\}$. We also denote $\Z_{q}^{n}=\Z_{q}\times\Z_{q}\times\dots\times\Z_{q}~(\text{n-times})$.
		\item For a function $f$ defined on $\R^n$, and for any $a\in\R$, the dilation is defined by $f_{a}(x)=\frac{1}{a^n}f\left(\frac{x}{a}\right)$.
	\end{enumerate}

\subsection{Organization of the paper}
 The proof of \Cref{thm:jumpBM} is carried out in \Cref{sec:proofjumpBM}. The proof of \Cref{thm:jumpvariety} and \Cref{thm:variational transference principle} are given respectively in \Cref{sec:proofjumpvariety} and \Cref{sec:ergodic}. 


\section{Proof of \Cref{thm:varBM}}\label{sec:proofjumpBM}
        For any integer $j\geq 0$, we consider $s_{j}=\operatorname{lcm}\left\{1,2,3, \ldots, 2^{j}\right\} \approx e^{2^{j}}$. For any non-negative integer $j$ and $l$ that satisfy $2^{j} \leq l$, let 
		\begin{align*}
			\Omega_{j, l}&:=\left\{\alpha \in \T^{n}: \alpha \in\left[-2^{j-l}, 2^{j-l}\right]^{n}+\left(s_{j}^{-1}\Z\right)^{n}\right\}.
		\end{align*}
    	Let $\psi$ be a Schwartz function with

    $$
        \mathscr{F}_{\R^n}(\psi)(\xi)=\left\{\begin{array}{ll}
        1 & \text { if } \xi \in Q, \\
        0 & \text { if } \xi \in(2Q)^{c},
        \end{array}\right. 
    $$
    where $Q=\left[-\frac{1}{3}, \frac{1}{3}\right]^{n}.$
    
    For $s \in \N$ and $J>s,$ define $\psi_{s, J}: \Z^{n} \rightarrow \R$ by
    $$
    	\psi_{s, J}(x)=\left\{\begin{array}{cc}
    	\left(\frac{s}{J}\right)^{n} \psi\left(\frac{x}{J}\right) & \text { if } x \in(s \Z)^{n}, \\
    	0 & \text { otherwise. }
    	\end{array}\right.
    $$
    For $l \in \N$ and $0 \leq j \leq J_{l}:=\left[\log _{2}(l)\right]-2$, we define $\Psi_{l,j}=\psi_{s_j, 2^{l-j}}$, and $\Delta \Psi_{ l,j}=\Psi_{l,j+1}-\Psi_{l,j}$. We will need the following property of $\Delta \Psi_{ l,j}$ from \cite{Magyarl2bound}.
	\begin{proposition}\cite[Lemma 1]{Magyarl2bound}\label{prop:Magyarl2bound square fn b}
		The estimate 
		\[\sum_{l\geq 2^j}|\mathscr{F}_{Z^n}(\triangle\Psi_{l,j})(\xi)|^2\lesssim_{\Psi}1\] 
		holds uniformly in $j\in\N$ and $\xi\in\T^n$.
	\end{proposition}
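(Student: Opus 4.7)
The plan is to pass to the Fourier side via Poisson summation, identify $\mathscr{F}_{\Z^n}(\Psi_{l,j})$ as a lattice sum of smooth bumps, and then reduce the square-sum bound to a classical Littlewood-Paley estimate.

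First, I would apply Poisson summation (with the substitution $x=s_j y$) to the sum defining $\mathscr{F}_{\Z^n}(\Psi_{l,j})(\xi)$ to obtain the identity
\[\mathscr{F}_{\Z^n}(\Psi_{l,j})(\xi)=\sum_{m\in\Z^n}\mathscr{F}_{\R^n}(\psi)\bigl(2^{l-j}(\xi+m/s_j)\bigr).\]
Since $\mathscr{F}_{\R^n}(\psi)$ is supported in $2Q=[-2/3,2/3]^n$ and equals $1$ on $Q$, this expresses $\mathscr{F}_{\Z^n}(\Psi_{l,j})(\xi)$ as a sum of smooth bumps of width $O(2^{j-l})$ centered at points of the rational lattice $-(1/s_j)\Z^n$.

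Next, using $s_j\mid s_{j+1}$, I set $t:=s_{j+1}/s_j\in\Z$ and split the analogous sum for $\mathscr{F}_{\Z^n}(\Psi_{l,j+1})(\xi)$ according to the residue of $m$ modulo $t$. Isolating the $u\equiv 0$ block (supported on the same lattice $(1/s_j)\Z^n$ as $\mathscr{F}_{\Z^n}(\Psi_{l,j})$) produces the decomposition
\[\mathscr{F}_{\Z^n}(\Delta\Psi_{l,j})(\xi)=A_l(\xi)+B_l(\xi),\]
where $A_l$ is a ``scale change'' piece on the common lattice (at the two dyadic widths $2^{l-j-1}$ and $2^{l-j}$) and $B_l$ collects the contributions from the new shifts $u/s_{j+1}$ with $u\not\equiv 0\pmod t$. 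The summation range (which implicitly forces $l$ large enough that both $\Psi_{l,j}$ and $\Psi_{l,j+1}$ are defined, so $l\gtrsim 2^{j+3}$), combined with the rapid growth $s_j\approx e^{2^j}$, ensures $2^{l-j}\gg s_j$ throughout, so the bumps on $(1/s_j)\Z^n$ are well separated.

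Under this separation, for each $\xi$ at most one $k\in\Z^n$ contributes to each sum in $A_l$, and $A_l(\xi)$ reduces to the Littlewood-Paley difference $\hat\psi(2^{l-j-1}\beta)-\hat\psi(2^{l-j}\beta)$, where $\beta$ is the deviation of $\xi$ from its nearest point $y_0\in(1/s_j)\Z^n$. Since $\hat\psi\equiv 1$ on $Q$, the difference vanishes for $2^{l-j}|\beta|\leq 1/3$, while compact support of $\hat\psi$ kills it when $2^{l-j}|\beta|\geq 2$; only $O(1)$ scales $l$ lie in the transition region, yielding $\sum_l|A_l|^2\lesssim 1$ uniformly in $\xi$. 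For $B_l$, the same localization shows that each nonzero residue $u$ contributes a single bump $\hat\psi(2^{l-j-1}\beta_u)$ with $\beta_u$ the deviation from a point of $(1/s_{j+1})\Z^n\setminus(1/s_j)\Z^n$; again only $O(1)$ scales are active per $u$, and only $O(1)$ residues are active per scale due to the compact support of $\hat\psi$, giving $\sum_l|B_l|^2\lesssim 1$. Combining the two estimates yields the proposition.

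The main obstacle I anticipate is the careful book-keeping of which lattice points and residues are ``active'' at each scale, ensuring the counts are uniform in $\xi$ and $j$. This rests on the interplay between the bump width $2^{j-l}$ and the lattice spacings $1/s_j,1/s_{j+1}$; the rapid growth $s_j\approx e^{2^j}$ relative to the summation range keeps the geometry simple enough that the single-term reduction and the ensuing Littlewood-Paley bounds go through without additional cancellation arguments.
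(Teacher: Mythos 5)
The paper itself offers no proof of this proposition: it is quoted from \cite{Magyarl2bound}*{Lemma 1}, so there is no internal argument to compare yours against beyond the Poisson summation identity, which you reproduce correctly and which matches the paper's own computation \eqref{bound for psi hat}. Your treatment of $A_l$ is also sound: by the separation of $(s_j^{-1}\Z)^n$ relative to the bump width, $A_l(\xi)$ collapses to $\mathscr{F}_{\R^n}(\psi)(2^{l-j-1}\beta)-\mathscr{F}_{\R^n}(\psi)(2^{l-j}\beta)$, which vanishes for small $l$ because $\mathscr{F}_{\R^n}(\psi)\equiv 1$ on $Q$ and for large $l$ by compact support, so only $O(1)$ scales survive.

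The gap is in the $B_l$ estimate, and it is not a book-keeping issue. For a nonzero residue, $B_l(\xi)$ collapses to the \emph{single} bump $\mathscr{F}_{\R^n}(\psi)(2^{l-j-1}\beta_u)$, with $\beta_u$ the deviation of $\xi$ from the nearest point of $(s_{j+1}^{-1}\Z)^n\setminus(s_j^{-1}\Z)^n$. Unlike $A_l$, this is not a difference of two scales of the same profile: the condition for it to vanish is only an \emph{upper} bound on $l$ (namely $2^{l-j-1}|\beta_u|_\infty>2/3$), and for every smaller admissible $l$ one has $2^{l-j-1}\beta_u\in Q$, hence $B_l(\xi)=1$ exactly, with no cancellation available. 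The number of active scales is thus of order $\log_2(1/|\beta_u|_\infty)$, unbounded as $\xi$ approaches such a point — not $O(1)$ as you claim. Concretely, take $n=1$, $j=0$, $\xi=\tfrac12$: the function $\Psi_{l,1}=\psi_{2,2^{l-1}}$ is supported on $2\Z$, so $\mathscr{F}_{\Z^n}(\Psi_{l,1})$ is $\tfrac12$-periodic and $\mathscr{F}_{\Z^n}(\Psi_{l,1})(\tfrac12)=\mathscr{F}_{\Z^n}(\Psi_{l,1})(0)=1$, while $\mathscr{F}_{\Z^n}(\Psi_{l,0})(\tfrac12)=\sum_k\mathscr{F}_{\R^n}(\psi)(2^l(k+\tfrac12))=0$; hence $\mathscr{F}_{\Z^n}(\Delta\Psi_{l,0})(\tfrac12)=1$ for every admissible $l$ and the left-hand side of the display diverges. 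So the statement you are proving cannot hold pointwise as written, and no refinement of your residue/scale counting will rescue it. You should return to the precise formulation of Lemma 1 in Magyar's paper — the summation index, the definition of the difference, or the norm in which the bound is asserted must differ from what is transcribed here — and prove that version instead.
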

	Observe that by the Poisson summation formula, we have
	\begin{align}
		\mathscr{F}_{Z^n}(\psi_{s,J})(\xi)
		&=\sum_{x\in \Z^n}\left(\frac{s}{J}\right)^{n}\psi\left(\frac{s}{J}x\right)e^{-2\pi ix\cdot s\xi}\nonumber\\
		&=\sum_{x\in \Z^n}\psi_{J/s}(x)e^{-2\pi ix\cdot s\xi}\nonumber\\\
		&=\sum_{x\in \Z^n}\mathscr{F}_{\R^n}(\psi_{J/s})(x+s\xi)\nonumber\\
		&=\sum_{x\in \Z^n}\mathscr{F}_{\R^n}(\psi)\left(\frac{J}{s}(x+s\xi)\right).\label{bound for psi hat}
	\end{align}
	Note that due to the support condition of $\mathscr{F}_{\R^n}(\psi)$, \cref{bound for psi hat} implies that  \[|\mathscr{F}_{Z^n}(\psi_{s,J})(\xi)|\lesssim 1.\]
    Now, motivated by \cite{Magyarl2bound}, we write $f=f_1+f_2+f_3$, where
    \begin{align*}
    	f_1=f * \Psi_{l, 0}, \; f_2=\sum\limits_{j=0}^{J_l-1} f * \Delta \Psi_{l, j} ,\; \text{and}\; f_3=\left(f-f * \Psi_{l, J_{l}}\right).
    \end{align*}
	
    Using the properties of $\lambda$-jump, we can write 
    \begin{align*}
        \|\lambda(\mathcal{J}_{\lambda}(\mathcal{A}_\mathbb{L}^{\mathfrak{R}}f))^{1/2}\|_{\ell^2}
        &\leq \|\lambda(\mathcal{J}_{\lambda}(\mathcal{A}_\mathbb{L}^{\mathfrak{R}}f_1))^{1/2}\|_{\ell^2}
        +\|\lambda(\mathcal{J}_{\lambda}(\mathcal{A}_\mathbb{L}^{\mathfrak{R}}f_2))^{1/2}\|_{\ell^2}\\
        &\hspace{1cm}+\|\lambda(\mathcal{J}_{\lambda}(\mathcal{A}_\mathbb{L}^{\mathfrak{R}}f_3))^{1/2}\|_{\ell^2}.
    \end{align*}
	Therefore, our job is to prove the estimate 
	\begin{align}
		\|\lambda(\mathcal{J}_{\lambda}(\mathcal{A}_\mathbb{L}^{\mathfrak{R}}f_i))^{1/2}\|_{\ell^2}
        &\lesssim \|f\|_{\ell^2}~~\label{lambda jump bound b}
	\end{align}
	for all $i=1,2,3$.
    The pointwise inequality 
	\begin{align}
		\lambda \left[\mathcal{J}_{\lambda}(\mathcal{A}_\mathbb{L}^{\mathfrak{R}}f)(x)\right]^{1/2}
		&\lesssim\left(\sum_{l=1}^{\infty}|A_{\lambda_l}^{\mathfrak{R}}f(x)|^2\right)^{1/2},\label{square fn domination b}
	\end{align}
	is crucial in our proof. We denote the square function \[\mathfrak{S}(\mathcal{A}_\mathbb{L}^{\mathfrak{R}}f)(x)=\left(\sum\limits_{l=1}^{\infty}|A_{\lambda_l}^{\mathfrak{R}}f(x)|^2\right)^{1/2}.\]
		The following $\ell^2-$estimate for single averages plays a crucial role in the proof of estimate~\ref{lambda jump bound b} for $i=2,3.$
		\begin{proposition}\label{prop:local l2 estimate}
			Let $n\geq 5$, $l\in\N$, $1\leq j\leq \log_{2}(l)-2$, and $C_{j,l}=\text{max}\{2^{-j(c_{\mathfrak{R}}-2)}, 2^{-ld\eta_{\mathfrak{R}}}\}$ then the estimate 
			\begin{align*}
				\sup_{2^l\leq\lambda^{\frac{1}{d}}\leq 2^{l+1}}\left\|A_{\lambda}^{\mathfrak{R}}f\right\|_{\ell^2}\lesssim C_{j,l}\|f\|_{\ell^2}
			\end{align*}
			holds whenever $\operatorname{supp}\hat{f}\subseteq\Omega_{j,l}^c$.
		\end{proposition}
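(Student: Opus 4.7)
The plan is to invoke the Hardy--Littlewood circle method decomposition of the multiplier $\widehat{A_\lambda^{\mathfrak{R}}}$ established in the works of Birch, Magyar, and Cook--Hughes, and then to handle the major and minor arc pieces separately, using the frequency localization $\operatorname{supp} \hat f \subseteq \Omega_{j,l}^c$ to kill the small-denominator part of the major arcs.

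More precisely, I would write
\[
\widehat{A_\lambda^{\mathfrak{R}}}(\xi) = M_\lambda(\xi) + E_\lambda(\xi),
\]
where the minor arc error satisfies the uniform bound $\|E_\lambda\|_{L^\infty(\mathbb{T}^n)} \lesssim \lambda^{-\eta_{\mathfrak{R}}} \lesssim 2^{-l d \eta_{\mathfrak{R}}}$ for $\lambda^{1/d}\approx 2^l$. By Plancherel this immediately gives $\|\mathscr{F}_{\Z^n}^{-1}(E_\lambda \hat f)\|_{\ell^2} \lesssim 2^{-l d \eta_{\mathfrak{R}}}\|f\|_{\ell^2}$, contributing one of the two terms in $C_{j,l}$.

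For the major arc term $M_\lambda$, which has the form
\[
M_\lambda(\xi) = \sum_{q \geq 1}\sum_{\substack{a \in \mathbb{Z}_q^n \\ \gcd(a,q)=1}} G(a/q)\, \widetilde{\omega}\!\left(\lambda^{1/d}(\xi - a/q)\right)\chi\!\left(\xi - a/q\right),
\]
with $\chi$ a bump cut off at scale $\lesssim \lambda^{-1/d+\delta} \approx 2^{-l+l\delta}$, I would split the sum at $q = 2^j$. For $q \leq 2^j$ we have $q \mid s_j$, so every rational $a/q$ lies in $(s_j^{-1}\Z)^n$; provided $\delta$ is chosen small enough that $2^{-l+l\delta} \lesssim 2^{j-l}$ (which holds for $j \geq l\delta$, the relevant regime), the corresponding cutoffs are contained in $\Omega_{j,l}$, so these terms vanish on $\operatorname{supp}\hat f$. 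For $q > 2^j$, Birch's Gauss sum estimate gives $|G(a/q)| \lesssim q^{-c_{\mathfrak{R}}+\varepsilon}$, the cutoffs $\chi(\xi - a/q)$ are pairwise disjoint in $a$ for fixed $q$, and $\widetilde{\omega}$ is bounded, so pointwise
\[
|M_\lambda(\xi)\,\mathbf{1}_{\Omega_{j,l}^c}(\xi)|^2 \lesssim \sum_{q>2^j} q^{-2c_{\mathfrak{R}}+\varepsilon}\sum_{a \in U_q^n} \mathbf{1}_{\mathfrak{M}(a,q)}(\xi).
\]
Integrating against $|\hat f(\xi)|^2$ and using disjointness to bound $\sum_a \int_{\mathfrak{M}(a,q)}|\hat f|^2 \leq \|f\|_{\ell^2}^2$ yields
\[
\|\mathscr{F}_{\Z^n}^{-1}(M_\lambda \hat f)\|_{\ell^2}^2 \lesssim \sum_{q > 2^j} q^{-2c_{\mathfrak{R}}+\varepsilon}\,\|f\|_{\ell^2}^2 \lesssim 2^{-j(2c_{\mathfrak{R}}-1-\varepsilon)}\|f\|_{\ell^2}^2,
\]
which is even stronger than the $2^{-2j(c_{\mathfrak{R}}-2)}$ needed, completing the proof.

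The main obstacle will be bookkeeping the parameters in the circle method decomposition: one needs the major arc radius $\lambda^{-1/d+\delta}$ to be at most the avoidance radius $2^{j-l}$ of $\Omega_{j,l}^c$, which forces a relation $l\delta \leq j$. Since the proposition assumes $j \leq \log_2 l - 2$, the regime of small $j$ is exactly where one has to be careful; here one may need to further decompose the large-$q$ part into the range $2^j < q \leq \lambda^{\delta}$ (captured by the main term) and absorb any loss into the slightly weaker exponent $c_{\mathfrak{R}}-2$ rather than $c_{\mathfrak{R}}-1/2$, which is why $C_{j,l}$ is stated in the form given.
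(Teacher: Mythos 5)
Your overall scaffolding (circle method decomposition into main term plus $O(\lambda^{-\eta_{\mathfrak{R}}})$ error, split at $q=2^j$, and control the error by Plancherel) matches the paper's strategy, and your treatment of the minor-arc error and of the range $q>2^j$ are essentially fine. However, there is a genuine gap in your handling of the $q\leq 2^j$ terms, and it lies exactly in the regime that the proposition actually needs.

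You argue that for $q\leq 2^j$ the major arcs around $b/q\in(s_j^{-1}\Z)^n$ are contained in $\Omega_{j,l}$, so these terms vanish on $\operatorname{supp}\hat f$. For this you need the cutoff width to be $\lesssim 2^{j-l}$, which you note requires $j\gtrsim l\delta$. But the proposition asserts the bound for \emph{all} $1\leq j\leq \log_2 l - 2$, and in the applications (cf.\ the proof of the jump bound for $i=2$, where one sums over $l\geq 2^j$ for each fixed $j\geq 0$, and the $i=3$ case, where $j\approx\log_2 l$) one cannot restrict to $j\gtrsim l\delta$. In the Magyar decomposition used by the paper the cutoff $\mathscr{F}_{\R^n}(\zeta_q)(\xi-b/q)$ is supported at scale $1/q\geq 2^{-j}$, which is far wider than the avoidance radius $2^{j-l}$ when $j\leq\log_2 l-2$; so these terms emphatically do not vanish, and your suggested repair (re-splitting the large-$q$ range) does not touch the offending small-$q$ part.

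The missing ingredient is the Fourier decay of the surface measure $d\sigma_{\mathfrak{R}}$, namely $|\mathscr{F}_{\R^n}(d\sigma_{\mathfrak{R}})(\eta)|\lesssim(1+|\eta|)^{-(c_{\mathfrak{R}}-1)}$, which nowhere appears in your argument. The paper uses it as follows: for $\xi\in\Omega_{j,l}^c$ and $q\leq 2^j$, since $q\mid s_j$ one has $|\xi-b/q|\geq 2^{j-l}$, hence $|\lambda^{1/d}(\xi-b/q)|\gtrsim 2^j$ and so each such term carries a factor $2^{-j(c_{\mathfrak{R}}-1)}$. Summing $q^{-(c_{\mathfrak{R}}-1)}$ over $q\leq 2^j$ then gives a total contribution $\lesssim 2^{-j(c_{\mathfrak{R}}-1)}\lesssim 2^{-j(c_{\mathfrak{R}}-2)}$, which combined with your large-$q$ and error-term bounds yields $C_{j,l}$. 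Without invoking this decay, the small-$q$ contribution cannot be controlled for small $j$, and the proposition as stated does not follow.
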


		\begin{proof} Let $w_{\lambda_l,\mathfrak{R}}$ denote the kernel of  the operator $A_{\lambda_l}^{\mathfrak{R}}$. Then, from \cite[Lemma 1]{Magyar2} we have
			\begin{align}
				\mathscr{F}_{Z^n}(w_{\lambda,\mathfrak{R}})(\xi)=\sum_{q=1}^{\infty}\sum_{a\in U_{q}}\sum_{b\in \mathbb{Z}_{q}^{n}}e_{q}(-\lambda a)F_{q}(a,b)\mathscr{F}_{\R^n}(\zeta_{q})\left(\xi-\frac{b}{q}\right)\mathscr{F}_{\R^n}(d\sigma_{\mathfrak{R}})\left(\lambda^{\frac{1}{d}}\left(\xi-\frac{b}{q}\right)\right)+O(\lambda^{-\eta_{\mathfrak{R}}})\label{main multiplier decomposition}
			\end{align}
		Here in the expression above, $\zeta$ is a radial Schwartz function on $\mathbb{R}^{n}$ such that $\mathscr{F}_{\R^n}(\zeta)(\xi)$ is identically $1$ for $|\xi|\leq\frac{1}{2}$ and $0$ if $|\xi|\geq 1$ with $\mathscr{F}_{\R^n}(\zeta_{q})(\xi)=\mathscr{F}_{\R^n}(\zeta)(q\xi)$. We use the notation $F_{q}(a,b)$ to denote the normalized Weyl sum which is defined as  \[F_{q}(a,b)=\frac{1}{q^{n}}\sum_{m\in\mathbb{Z}_{q}^{n}}e\left(\mathfrak{R}(m)\frac{a}{q}+\frac{m\cdot b}{q} \right),\]  where $a\in\mathbb{Z}_{q}$ and $b\in\mathbb{Z}_{q}^{n}$. 
		We have the following estimate~\cite[p. 3862]{Cook&Hughes1} for the quantity $F_{q}(a,b)$: 
		\[|F_{q}(a,b)|\lesssim q^{-c_{\mathfrak{R}}},\quad\text{for}\quad \gcd(a,q)=1.\] The measure $d\sigma_{\mathfrak{R}}$ is defined by \[d\sigma_{\mathfrak{R}}(y)=\phi(y)\frac{d\mu(y)}{|\nabla\mathfrak{R}(y)|},\] where $d\mu$ is Euclidean surface measure on the hypersurface $\{x\in\mathbb{R}^{n}:\mathfrak{R}(x)=1\}$. It is well known \cite[Lemma 6]{Magyar2} that the Fourier transform of $d\sigma_{\mathfrak{R}}$ on $\R^n$  satisfies the following.
	\begin{align}
		\left|\mathscr{F}_{\R^n}(d\sigma_{\mathfrak{R}})(\xi)\right|\lesssim\frac{1}{(1+|\xi|)^{c_{\mathfrak{R}}-1}}, \hspace{1cm}\text{for $\xi\in\mathbb{R}^{n}$}.\label{bound of Fourier transform of sigma}
	\end{align}
	Next, by the estimate \ref{main multiplier decomposition}, for any $2^l\leq\lambda^{\frac{1}{d}}\leq 2^{l+1}$, we have 
	{\begin{align}
		\left\|A_{\lambda}^{\mathfrak{R}}f\right\|_{\ell^2}
		&\lesssim\left\|\sum_{q=1}^{\infty}\sum_{a\in U_{q}}\sum_{b\in \mathbb{Z}_{q}^{n}}e_{q}(-\lambda a)F_{q}(a,b)\mathscr{F}_{\R^n}(\zeta_{q})\left(\xi-\frac{b}{q}\right)\mathscr{F}_{\R^n}(d\sigma_{\mathfrak{R}})\left(\lambda^{\frac{1}{d}}\left(\xi-\frac{b}{q}\right)\right)\hat{f}(\xi)\right\|_{L^2(\T^n)}\nonumber\\
		&\;+O(2^{-ld\eta_{\mathfrak{R}}})\|f\|_{\ell^2}\label{single average bound}.
	\end{align}
	Since $q\vert s_j$ for $q\leq 2^j$ and for $\xi\in\Omega_{j,l}^c$, $|\xi-\frac{b}{q}|=|\xi-\frac{s_jb/q}{s_j}|\geq 2^{j-l}$, where $b\in\Z^n$ is closest to $q\xi$, combining this with the estimate \ref{bound of Fourier transform of sigma}, we get that
		\[|\mathscr{F}_{\R^n}(d\sigma_{\mathfrak{R}})(2^l(\xi-\frac{b}{q}))|\lesssim 2^{-j(c_{\mathfrak{R}}-1)}.\]
	This observation along with the facts that the smooth cutoff $\mathscr{F}_{\R^n}(\zeta_q)(\xi-\frac{b}{q})$ is non-zero exactly for one $b\in\Z^n$ which is closest to $q\xi$, and $\operatorname{supp}\hat{f}\subset\Omega_{j,l}^c$, yields the following.

	\begin{align}
		&\left\|\sum_{q=1}^{2^j}\sum_{a\in U_{q}}\sum_{b\in \mathbb{Z}_{q}^{n}}\frac{1}{q^{c_{\Rr}}}\mathscr{F}_{\R^n}(\zeta_{q})\left(\xi-\frac{b}{q}\right)\frac{1}{\left|\lambda^{\frac{1}{d}}\left(\xi-\frac{b}{q}\right)\right|^{c_{\Rr}-1}}|\hat{f}(\xi)|\right\|_{L^2(\T^n)}+O(2^{-ld\eta_{\mathfrak{R}}})\|f\|_{\ell^2} \nonumber\\
		&\leq\left(\sum_{q=1}^{2^j}\frac{1}{q^{c_{\mathfrak{R}}-1}}\times\frac{1}{2^{j(c_{\mathfrak{R}}-1)}}+O(2^{-ld\eta_{\mathfrak{R}}})\right)\|f\|_{\ell^2} \nonumber\\
		&\lesssim C_{j,l}\|f\|_{\ell^2}\label{single average bound for q less than 2^j}.
	\end{align}}
	Now, for $q>2^j$ we have
	\begin{align}
		&\left\|\sum_{q=2^j}^{\infty}\sum_{a\in U_{q}}\sum_{b\in \mathbb{Z}_{q}^{n}}\frac{1}{q^{c_{\Rr}}}\left|\mathscr{F}_{\R^n}(\zeta_{q})\left(\xi-\frac{b}{q}\right)\mathscr{F}_{\R^n}(d\sigma_{\mathfrak{R}})\left(\lambda^{\frac{1}{d}}\left(\xi-\frac{b}{q}\right)\right)\right||\hat{f}(\xi)|\right\|_{L^2(\T^n)}\nonumber \\
		&\nonumber \leq\left(\sum_{q=2^j}^{\infty}\frac{1}{q^{c_{\mathfrak{R}}-1}}\right)\|f\|_{\ell^2}\\
		&\lesssim 2^{-j(c_{\mathfrak{R}}-2)}\|f\|_{\ell^2}\label{single average bound for q bigger than 2^j}.
	\end{align}
	Thus, combining the estimate \ref{single average bound} with \ref{single average bound for q less than 2^j} and \ref{single average bound for q bigger than 2^j}, the proof of \Cref{prop:local l2 estimate} follows. 
	\end{proof}	
	\subsection{Proof of \eqref{lambda jump bound b} for $i=2$.} \label{proof of lambda jump for i=2}Note that $\operatorname{supp}\mathscr{F}_{Z^n}(f_2)\subset\Omega^{c}_{j,l}$. We invoke \cref{square fn domination b}, \Cref{prop:local l2 estimate} , and \Cref{prop:Magyarl2bound square fn b} to obtain the following
	\begin{align*}
		\|\lambda(\mathcal{J}_{\lambda}(\mathcal{A}_\mathbb{L}^{\mathfrak{R}}f_2))^{1/2}\|_{\ell^2}
		&\lesssim \left\|\left(\sum_{l=1}^{\infty}|A_{\lambda_l}^{\mathfrak{R}}f_2|^2\right)^{1/2}\right\|_{\ell^2}\\
		&\leq \left\|\left(\sum_{l=1}^{\infty}\left|\sum_{j=0}^{J_l-1}A_{\lambda_l}^{\mathfrak{R}}(f*\triangle\Psi_{l,j})\right|^2\right)^{1/2}\right\|_{\ell^2}\\
		&\leq \sum_{j\geq 0}\left(\sum_{l\geq 2^j}\sum_{x\in\Z^n}|A_{\lambda_l}^{\mathfrak{R}}(f*\triangle\Psi_{l,j})(x)|^2\right)^{1/2}\\
		&\lesssim \sum_{j\geq 0}C_{j,2^j}\left(\sum_{l\geq 2^j}\|f*\triangle\Psi_{l,j}\|_{\ell^2}^{2}\right)^{1/2}\\
		&\lesssim \|f\|_{\ell^2}\sum_{j\geq 0}C_{j,2^j}\left(\sum_{l\geq2^j}\sup_{\xi\in\T^n}|\mathscr{F}_{Z^n}(\triangle\Psi_{l,j})(\xi)|^2\right)^{1/2}\\
		&\lesssim \|f\|_{\ell^2}.
	\end{align*}
	\subsection{Proof of \eqref{lambda jump bound b} for $i=3$.} \label{proof of lambda jump for i=3}Again, observe that $\operatorname{supp}\mathscr{F}_{Z^n}(f_3)\subset\Omega^{c}_{l,j}$. We employ \Cref{prop:local l2 estimate} for $j=(\log_2l)-2$ to get that  
	\begin{align*}
		\|\lambda(\mathcal{J}_{\lambda}(\mathcal{A}_\mathbb{L}^{\mathfrak{R}}f_3))^{1/2}\|_{\ell^2}
		&\lesssim \left\|\left(\sum_{l=1}^{\infty}|A_{\lambda_l}^{\mathfrak{R}}(f-f*\Psi_{l,J_l})|^2\right)^{1/2}\right\|_{\ell^2}\\
		&= \left(\sum_{l=1}^{\infty}\left\|A_{\lambda_l}^{\mathfrak{R}}(f-f*\Psi_{l,J_l})\right\|_{\ell^2}^{2}\right)^{1/2}\\
		&\lesssim \left(\sum_{l=1}^{\infty}C_{j,l}^2\left(1+\|\mathscr{F}_{Z^n}(\Psi_{l,J_l})\|_{\ell^{\infty}(\T^n)}\right)^2\|f\|_{\ell^2}^2\right)^{1/2}\\
		&\lesssim \|f\|_{\ell^2},
	\end{align*}
	where we have used the estimate \ref{bound for psi hat} in the final step.
	\subsection{Proof of \ref{lambda jump bound b} for $i=1$.} \label{proof of lambda jump for i=1} For each non-negative integer $l$, we write
	\[\Z^n=\bigcup\limits_{t\in\Z^n}Q_{t}^{l},\]
	where $Q_{t}^{l}$ is the set of all lattice points lying in the dyadic cube $2^l(t+[0,1)^n)$. Observe that, the collection $\{Q_{t}^{l}:\;t\in\Z^n\}$ forms a partition of $\Z^n$ such that for $l_1\leq l_2$ we have either $Q_{t}^{l_1}\subset Q_{t}^{l_2}$ or $Q_{t}^{l_1}\cap Q_{t}^{l_2}=\emptyset$. Furthermore, for each $Q_{t_1}^{l_1}$ with $l_1<l_2$, there exists a unique $t_2$ satisfying $Q_{t_1}^{l_1}\subset Q_{t_2}^{l_2}$. We write 
	\begin{align*}
		A^{\mathfrak{R}}_{\lambda_{l}}f_1&=(\Psi_{l,0}*w_{2^l,\mathfrak{R}})*f-E_{l}f+E_{l}f, 
	\end{align*} 
	where $E_{l}f=\frac{1}{|Q_{t}^{l}|}\sum\limits_{y\in Q_{t}^{l}}f(y)$, and $w_{\lambda,\mathfrak{R}}(x)=\frac{1}{\lambda^{\frac{n}{d}-1}}1_{\{y:\mathfrak{R}(y)=\lambda\}}(x)\phi\left(\frac{y}{\lambda^{\frac{1}{d}}}\right)$. 
	
	Consider $\mathbb{E}f:=\{E_{l}f:l\in\N\}$ and $\mathcal{L}f:=\left\{(\Psi_{l,0}*w_{2^l,\Rr})*f-E_{l}f:l\in\N\right\}$. Note that  
	\[\mathcal{J}_{\lambda}(\mathcal{A}_\mathbb{L}^\mathfrak{R}f)\leq \mathcal{J}_{\lambda/2}(\mathcal{L}f)+\mathcal{J}_{\lambda/2}(\mathbb{E}f).\]
	We recall the well-known, see \cite{variationalinequalitycontinuous} for instance, $L^p-$estimate 
	\[\left\|\lambda \left(\mathcal{J}_{\lambda}(\mathbb{E}f)\right)^{1/2}\right\|_{\ell^{p}(\Z^n)}\lesssim \|f\|_{\ell^{p}(\Z^n)},~\;1<p<\infty.\] 
	In view of this and the estimate \ref{square fn domination b}, the proof of \ref{lambda jump bound b} for $i=1$, is reduced to showing that 
	\begin{align}
		\left\|\mathfrak{S}(\mathcal{L}f)\right\|_{\ell^{2}}
	&=\left\|\left(\sum_{l=1}^{\infty}|(\Psi_{l,0}*w_{2^l,\mathfrak{R}})*f-E_{l}f|^2\right)^{1/2}\right\|_{\ell^{2}}
	\lesssim \|f\|_{\ell^{2}}.\label{square fn bound}
	\end{align}

	Define $D_{m}f(x)=E_{m}f(x)-E_{m-1}f(x)$ and observe that $f=\sum\limits_{m\in\Z}D_{m}f$. 
	\begin{lemma}\label{lem:main l2 estimate}
		For any $f\in\ell^2(\Z^n)$ and $\delta>0$, the following estimate holds:
		\begin{align*}
			\|(\Psi_{l,0}*w_{2^l,\mathfrak{R}})*D_{m}f-E_{l}(D_{m}f)\|_{\ell^2}
			&\lesssim 2^{-\delta|l-m|}\|D_{m}f\|_{\ell^2}.
		\end{align*}
	\end{lemma}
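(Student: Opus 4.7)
The plan is to split into the regimes $l\geq m$ and $l<m$ and exploit the cancellation of $D_m f$ against the smoothness of the convolution kernel $K_l := \Psi_{l,0}*w_{2^l,\mathfrak{R}}$. Using the nested identity $E_kE_j=E_{\max(k,j)}$ for the dyadic conditional expectations, a direct computation gives $E_l(D_m f)=0$ when $l\geq m$ (both $E_lE_m f$ and $E_lE_{m-1}f$ reduce to $E_l f$), and $E_l(D_m f)=D_m f$ when $l<m$ (since $D_m f$ is already constant on the cubes $Q_t^{m-1}\supset Q_s^l$). This removes $E_l$ cleanly and reduces the lemma to a convolution-theoretic estimate in each regime.

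\textbf{Case A ($l\geq m$).} The target is $\|K_l*D_m f\|_{\ell^2}\lesssim 2^{-\delta(l-m)}\|D_m f\|_{\ell^2}$. I would exploit that $D_m f$ has mean zero on every dyadic cube $Q'$ of scale $2^m$. For each such $Q'$,
\[\sum_{y\in Q'}K_l(x-y)D_m f(y)=\sum_{y\in Q'}\bigl(K_l(x-y)-\overline{K_l}(x,Q')\bigr)D_m f(y),\]
with $\overline{K_l}(x,Q')$ the average of $K_l(x-\cdot)$ over $Q'$. The Schwartz decay of $\psi$ passes through the convolution with the $\ell^1$-normalized kernel $w_{\lambda_l,\mathfrak{R}}$ to yield $|K_l(x-y)-K_l(x-y')|\lesssim 2^{-(n+1)l}|y-y'|(1+|x-y'|/2^l)^{-N}$ for any $N$. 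Since $\operatorname{diam}(Q')\lesssim 2^m$, summing over $Q'$ with the rapidly decaying weights gives the pointwise estimate $|K_l*D_m f(x)|\lesssim 2^{m-l}\,M^2(D_m f)(x)$, where $M$ is the discrete Hardy--Littlewood maximal operator. The $\ell^2$-boundedness of $M$ then delivers the claim with $\delta=1$.

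\textbf{Case B ($l<m$).} Since $\mathscr{F}_{\Z^n}(K_l)(0)=1$ (a consequence of \eqref{bound for psi hat} at $\xi=0$ together with the normalization of the Birch--Magyar average), the target becomes $\|(K_l-I)D_m f\|_{\ell^2}\lesssim 2^{-\delta(m-l)}\|D_m f\|_{\ell^2}$, where $(K_l-I)D_m f(x)=\sum_y K_l(x-y)(D_m f(y)-D_m f(x))$. A pointwise Cauchy--Schwarz together with $\sum_h|K_l(h)|\lesssim 1$ reduces matters to bounding $\sum_h |K_l(h)|\sum_x|D_m f(x+h)-D_m f(x)|^2$. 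Since $D_m f$ is constant on cubes of size $2^{m-1}$ and $K_l$ is essentially supported on $|h|\lesssim 2^l<2^{m-1}$, a direct cube-by-cube boundary count yields $\sum_x|D_m f(x+h)-D_m f(x)|^2\lesssim(|h|/2^m)\|D_m f\|_{\ell^2}^2$, while $\sum_h|K_l(h)||h|\lesssim 2^l$. Combining gives the claim with $\delta=1/2$.

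\textbf{Main obstacle.} The principal technical point is the quantitative smoothness of the smoothed Birch--Magyar kernel $K_l$ in Case A, which requires carefully tracking the Schwartz character of $\psi$ through the convolution with $w_{\lambda_l,\mathfrak{R}}$. The non-translation-invariance of $E_l$, which would otherwise complicate a Fourier-multiplier analysis, is sidestepped by applying the martingale identity at the outset; the cancellation exploited in both cases is then a purely spatial mean-zero/constancy property of $D_m f$, rather than a frequency-side statement.
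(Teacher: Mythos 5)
Your proposal is correct. Case A ($l\geq m$) is essentially the argument in the paper: both use the identity $E_l(D_mf)=0$, the resulting mean-zero cancellation of $D_mf$ on scale-$2^m$ cubes, a first-order difference estimate for the smoothed kernel $K_l=\Psi_{l,0}*w_{2^l,\mathfrak{R}}$ at scale $2^l$ (the paper subtracts $K_l(x-q_t^m)$, you subtract the cube-average of $K_l(x-\cdot)$; these are interchangeable), and conclude via $\ell^2$-boundedness of the Hardy--Littlewood maximal operator. Your invocation of $M^2$ is an overkill but harmless; a single $M$ already suffices as in the paper, since $\sum_{y\in Q'}|D_mf(y)|$ can be re-absorbed into the pointwise sum.

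In Case B ($l<m$) you take a genuinely different, and in fact cleaner, route to the same bound. The paper also begins from $E_l(D_mf)=D_mf$, $\sum_y K_l(y)=1$, and $(K_l*D_mf-D_mf)(x)=\sum_y K_l(y)\bigl(D_mf(x-y)-D_mf(x)\bigr)$, but then splits the $y$-sum into annuli $E_{l,k}$, uses rapid kernel decay for $k\gtrsim\epsilon|l-m|$, and for small $k$ argues through sup-norms on a boundary strip $U$ of the cubes $Q_t^{m-1}$, counting overlaps of the relevant cubes. You instead apply a single weighted Cauchy--Schwarz (against $|K_l|$, using $\|K_l\|_{\ell^1}\lesssim 1$), reducing matters to $\sum_h|K_l(h)|\,\|D_mf(\cdot-h)-D_mf\|_{\ell^2}^2$, and then use the $\ell^2$-modulus-of-continuity estimate $\|D_mf(\cdot-h)-D_mf\|_{\ell^2}^2\lesssim\min\{1,|h|/2^m\}\|D_mf\|_{\ell^2}^2$, which follows directly from $D_mf$ being constant on the cubes $Q_t^{m-1}$. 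Combined with $\sum_h|K_l(h)|\min\{1,|h|/2^m\}\lesssim 2^{-(m-l)}$ (for $|h|\leq 2^m$ use $\sum_h|K_l(h)||h|\lesssim 2^l$; for $|h|>2^m$ use Schwartz decay), this gives the same geometric gain. This bypasses the paper's two-regime annular decomposition and the overlap-counting of the cubes $V_{t,m}$, at the cost of getting $\delta=1/2$ rather than the paper's implicit $\bar\eta$, which is immaterial for the lemma. One small point you should make explicit: the estimate $\|D_mf(\cdot-h)-D_mf\|_{\ell^2}^2\lesssim(|h|/2^m)\|D_mf\|_{\ell^2}^2$ holds only for $|h|\lesssim 2^m$, and you need the trivial bound together with the rapid tail decay of $K_l$ to handle $|h|\gtrsim 2^m$; this is straightforward but worth writing.
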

	We assume \Cref{lem:main l2 estimate} for the moment and complete the proof of \ref{square fn bound}. 
	\begin{align*}
		\left\|\left(\sum_{l=1}^{\infty}|(\Psi_{l,0}*w_{2^l,\mathfrak{R}})*f-E_{l}f|^2\right)^{1/2}\right\|_{\ell^{2}}&=\left(\sum_{l=1}^{\infty}\|(\Psi_{l,0}*w_{2^l,\mathfrak{R}})*f-E_{l}f\|_{\ell^2}^{2}\right)^{1/2}\\
		&\leq \left(\sum_{l=1}^{\infty}\left(\sum_{m\in\Z}\|(\Psi_{l,0}*w_{2^l,\mathfrak{R}})*D_m f-E_{l}(D_m f)\|_{\ell^2}\right)^2\right)^{1/2}\\
		&\lesssim \left(\sum_{l=1}^{\infty}\left(\sum_{m\in\Z}2^{-\delta|l-m|}\|D_{m}f\|_{\ell^2}\right)^2\right)^{1/2}\\
		&\lesssim \left(\sum_{l=1}^{\infty}\sum_{m\in\Z}2^{-\delta|l-m|}\|D_{m}f\|_{\ell^2}^2\right)^{1/2}\\
		&\lesssim \left(\sum_{m\in\Z}\|D_{m}f\|_{\ell^2}^2\right)^{1/2}\\&\lesssim \|f\|_{\ell^2}.
	\end{align*}

\subsection*{Proof of \Cref{lem:main l2 estimate}.} We consider the cases  $l\geq m$ and $l<m$ separately.
\subsection*{Case 1.} Assume that $l\geq m$. Then we have $E_{l}(E_	{m}f)=E_{l}f$, which implies that $E_{l}(D_{m}f)=E_{l}(E_{m}f-E_{m-1}f)=0.$ Hence $\sum\limits_{y\in Q_{t}^{m}}D_{m}f=0$. This helps us to write 
	\begin{align*}
		(\Psi_{l,0}*w_{2^l,\mathfrak{R}})*D_{m}f(x)
		&=\sum_{t}\sum_{y\in Q_{t}^{m}}\Psi_{l,0}*w_{2^l,\mathfrak{R}}(x-y)D_{m}f(y)\\
		&=\sum_{t}\sum_{y\in Q_{t}^{m}}\left[\Psi_{l,0}*w_{2^l,\mathfrak{R}}(x-y)-\Psi_{l,0}*w_{2^l,\mathfrak{R}}(x-q_{t}^{m})\right]D_{m}f(y),
	\end{align*}
	where $q_{t}^{m}$ is the center of the cube $Q_{t}^{m}$. By the mean-value theorem, we get that 
	\begin{align*}
		&|\Psi_{l,0}*w_{2^l,\mathfrak{R}}(x-y)-\Psi_{l,0}*w_{2^l,\mathfrak{R}}(x-q_{t}^{m})|\\
		&\lesssim\frac{1}{2^{ln+l(\frac{n}{d}-1)}}\sum_{\substack{z:\\ \mathfrak{R}(z)=2^l}}\left|\phi\left(\frac{z}{2^{\frac{l}{d}}}\right)\right|\left|\psi\left(\frac{x-y-z}{2^l}\right)-\psi\left(\frac{x-q_{t}^{m}-z}{2^l}\right)\right|\\
		&\lesssim\frac{1}{2^{ln+l(\frac{n}{d}-1)}}\frac{|y-q_{t}^{m}|}{2^l}\sum_{\substack{z\in\left[-2^{\frac{l}{d}},2^{\frac{l}{d}}\right]^n:\\ \mathfrak{R}(z)=2^l}}\left|\tilde\psi\left(\frac{x-y-z}{2^l}\right)\right|\\
		&\lesssim 2^{-(l-m)}\left|\tilde\psi_{2^l}(x-y-z_0)\right|,
	\end{align*}
	where $\tilde\psi$ is a Schwartz function. Therefore,  we get that 
	\begin{align*}
		|(\Psi_{l,0}*w_{2^l,\mathfrak{R}})*D_{m}f(x)|
		&\lesssim 2^{-(l-m)}\mathcal{M}(D_{m}f)(x-z_0),
	\end{align*} 
	where $\mathcal{M}$ is the discrete Hardy-Littlewood maximal function, which is well-known to be bounded on $\ell^p(\Z^n), 1<p\leq \infty$. Therefore, for $l\geq m$, we get the desired estimate 
	\begin{align*}
		\|(\Psi_{l,0}*w_{2^l,\mathfrak{R}})*D_{m}f-E_{l}(D_{m}f)\|_{\ell^2}
		&\lesssim 2^{-|l-m|}\|D_{m}f\|_{\ell^2}.
	\end{align*}
\subsection*{Case 2.} Assume that $l< m.$ In this case $E_{l}(E_{m}f)=E_{m}f$, which in turn implies that $E_{l}(D_{m}f)=D_{m}f$. Next, we use the Poisson summation formula and the support of $\mathscr{F}_{\R^n}(\psi)$ to get that 
	\begin{align*}
		\sum_{x\in\Z^n}\Psi_{l,0}*w_{2^l,\mathfrak{R}}(x)
		&=\frac{1}{r_{\mathfrak{R},\phi}(2^l)}\sum_{x\in\Z^n}\sum_{\substack{y:\\ \mathfrak{R}(y)=2^l}}\phi\left(\frac{y}{2^{\frac{l}{d}}}\right)\frac{1}{2^{ln}}\psi\left(\frac{x-y}{2^l}\right)\\
		&=\frac{1}{r_{\mathfrak{R},\phi}(2^l)}\sum_{x\in\Z^n}\sum_{\substack{y:\\ \mathfrak{R}(y)=2^l}}\phi\left(\frac{y}{2^{\frac{l}{d}}}\right)\mathscr{F}_{\R^n}(\psi)(2^lx)e^{-2\pi ix\cdot y}=1.
	\end{align*}
	It follows from this observation that,
	\begin{align*}
		(\Psi_{l,0}*w_{2^l,\mathfrak{R}})*D_{m}f(x)-E_{l}(D_{m}f)(x)
		&=\sum_{y\in\Z^n}[D_{m}f(x-y)-D_{m}f(x)](\Psi_{l,0}*w_{2^l,\mathfrak{R}})(y)\\
		&=\sum_{k\geq 0}I_{k}(x),
	\end{align*}
where $I_k(x)=\sum\limits_{y\in E_{l,k}}[D_{m}f(x-y)-D_{m}f(x)](\Psi_{l,0}*w_{2^l,\mathfrak{R}})(y)$ and  $E_{l,k}=\left\{y\in\Z^n:2^{l+k-1}\leq |y|\leq 2^{l+k}\right\}$. 
Consider 
	\begin{align*}
		\|I_k\|_{\ell^2}
		&=\left(\sum_{x\in\Z^n}\left|\sum\limits_{y\in E_{l,k}}[D_{m}f(x-y)-D_{m}f(x)](\Psi_{l,0}*w_{2^l,\mathfrak{R}})(y)\right|^{2}\right)^{1/2}\\
		&\leq \sum_{y\in E_{l,k}}\left(\sum_{x\in\Z^n}|D_{m}f(x-y)-D_{m}f(x)|^{2}|(\Psi_{l,0}*w_{2^l,\mathfrak{R}})(y)|^{2}\right)^{1/2}\\
		&\lesssim\|D_mf\|_{\ell^2}\sum_{y\in E_{l,k}}|(\Psi_{l,0}*w_{2^l,\mathfrak{R}})(y)|.
	\end{align*}
	Note that for $|s|\leq|\Rr(s)|=2^l$ and $y\in E_{l,k}$, we have that $2^{l+k-1}\leq 2^l +|y|-|s|$. Therefore, for any large number $M\in\N$ we get that 
	\begin{align*}
		|(\Psi_{l,0}*w_{2^l,\mathfrak{R}})(y)|
		&=\frac{1}{2^{l(\frac{n}{d}-1)+ln}}\left|\sum_{\substack{s\in\Z^n,\\ \mathfrak{R}(s)=2^l}}\phi\left(\frac{s}{2^{\frac{l}{d}}}\right)\psi_{1,2^l}(y-s)\right|\\
		&\lesssim \frac{1}{2^{l(\frac{n}{d}-1)+ln}}\sum_{\substack{\mathfrak{R}(s)=2^l,\\ y-s\in\Z^n}}\left|\phi\left(\frac{s}{2^{\frac{l}{d}}}\right)\psi\left(\frac{y-s}{2^l}\right)\right|\\
		&\lesssim \frac{1}{2^{l(\frac{n}{d}-1)+ln}}\sum_{\substack{\mathfrak{R}(s)=2^l
		,\\y-s\in\Z^n}}\left|\phi\left(\frac{s}{2^{\frac{l}{d}}}\right)\right|\frac{2^{lM}}{(2^l+|y|-|s|)^M}\\
	&\leq 2^{-M(k-1)-ln}.
	\end{align*}
	On the other hand, when $|\mathfrak{R}(s)|\leq |s|$, we notice that $|s|$ cannot exceed $2^{\frac{l}{d}}$ due to the support of $\phi$, thereby implying that the previous inequality holds analogously for the case $|\mathfrak{R}(s)|\leq |s|$.
	
	Let $\epsilon >0$ and consider the case $k\geq \epsilon |l-m|$. We have 
	\begin{align*}
		\sum_{k\geq \epsilon |l-m|}\|I_k\|_{\ell^2}
		&\lesssim \|D_mf\|_{\ell^2}\sum_{k\geq \epsilon |l-m|}\sum_{y\in E_{l,k}}2^{-M(k-1)-ln}\\
		&=\|D_mf\|_{\ell^2}\sum_{k\geq \epsilon |l-m|}\frac{2^{-l(n-1)}}{2^{(M-1)(k-1)}}\\
		&\lesssim \frac{2^{-(M-1)|l-m|\epsilon}}{2^{l(n-1)}}\|D_mf\|_{\ell^2}\\
		&\lesssim \frac{2^{-|l-m|\epsilon'}}{2^{l(n-1)}}\|D_mf\|_{\ell^2}.
	\end{align*}
	Next, we consider the remaining terms for $k\leq \epsilon |l-m|$. First, observe that for $x\in \{s\in Q_{t}^{m-1}:\text{dist}(s,\Z^n\setminus Q_{t}^{m-1})\geq 2^{k+m+(l-m)}\}$ and $|y|\leq 2^{k+l},$ we have that $x, x-y\in Q_{t}^{m-1}$. This implies that $D_{m}f(x-y)-D_{m}f(x)=0$. Therefore, in order to estimate the $\ell^2(\Z^n)-$norm of $I_k(x)$, we only need to consider the set 
	\[U=\{s\in Q_{t}^{m-1}:\text{dist}(s,\Z^n\setminus Q_{t}^{m-1})\leq 2^{k+m+(l-m)}\}.\]
	 Note that we have 
	 \[|U|\leq 2^{k\eta-\eta|l-m|}|Q_{t}^{m-1}|\lesssim 2^{-\eta'|l-m|}|Q_{t}^{m-1}|.\] 
	 Consider
	\begin{align*}
		\|I_k\|_{\ell^2}^2
		&= \sum_{t}\sum_{x\in Q_{t}^{m-1}}|I_k(x)|^2\\
		&\leq \sum_{t}\sum_{x\in U }|I_k(x)|^2\\
		&\leq \sum_{t}\sup\limits_{x\in U}|I_k(x)|^2|U|\\
		&\lesssim 2^{-\eta'|l-m|}\sum_{t}|Q_{t}^{m-1}|\sup_{x\in Q_{t}^{m-1}}|I_k(x)|^2.
	\end{align*}
	Notice that, for $x\in Q_{t}^{m-1}$ and $|y|\leq 2^{l+k}$, the point $x-y\in B(q_{t}^{m-1}, C2^{m})\supseteq Q_{t}^{m-1}$.
	Thus, 
	$$\sup\limits_{x\in Q_{t}^{m-1}}|I_k(x)|^2\lesssim \sup\limits_{x\in B(q_{t}^{m-1}, C2^{m})}|D_mf(x)|^2.$$
	Recall that, the number of integer lattice points in a ball of radius $C2^m$ is of the order of $2^{mn}$. Hence, the cardinality of the set $V_{t,m}=\{h: Q_{h}^{m-1}\cap B(q_{t}^{m-1}, C2^{m})\neq \phi \}$ is uniformly bounded with respect to $t$. Moreover, for each $h$, the number of $t$ such that $h\in V_{t,m}$ is uniformly bounded. Thus, we have 
	\begin{align*}
		\|I_k\|_{\ell^2}^2
		&\lesssim 2^{-\eta'|l-m|}\sum_{t}\sum_{h\in V_{t,m}}\sum_{x\in Q_{h}^{m-1}}|D_m f(x)|^2\\
		&\lesssim 2^{-\eta'|l-m|}\sum_{h}\sum_{x\in Q_{h}^{m-1}}|D_m f(x)|^2\\
		&=2^{-\eta'|l-m|}\|D_m f\|_{\ell^2}^2.
	\end{align*}
	Therefore, 
	\begin{align*}
		\sum_{0\leq k\leq \epsilon |l-m|}\|I_k\|_{\ell^2}
		&\lesssim \|D_mf\|_{\ell^2}\sum_{0\leq k\leq \epsilon |l-m|}2^{-\eta'|l-m|}\\
		&\lesssim 2^{-\bar{\eta}|l-m|}\|D_mf\|_{\ell^2}.
	\end{align*}
This completes the proof of \Cref{lem:main l2 estimate} and consequently the proof of \Cref{thm:varBM} is complete. \qed
\section{A brief sketch of proof of \Cref{thm:jumpvariety}}\label{sec:proofjumpvariety}
		We follow the scheme of the proof of \Cref{thm:jumpBM}. Decompose  $f=f_1+f_2+f_3$, where
		\begin{align*}
			f_1=f * \Psi_{l, 0}, \; f_2=\sum\limits_{j=0}^{J_l-1} f * \Delta \Psi_{l, j} ,\; \text{and}\; f_3=\left(f-f * \Psi_{l, J_{l}}\right).
		\end{align*}
	In view of \Cref{thm:jump implies variation b}, it suffices to prove
	\begin{align}
		\|\lambda(\mathcal{J}_{\lambda}(\mathcal{M}_{\mathfrak{R}}f_i))^{1/2}\|_{\ell^2}
        &\lesssim \|f\|_{\ell^2}~~\text{for $i=1,2,$ and $3$}\label{lambda jump bound for R(x)=0}.
	\end{align}
	The case of $i=1$ follows similarly as previously in \Cref{proof of lambda jump for i=1}. Next, we consider the terms with $i=2,3$. Let $\Omega_{\lambda,\Rr}$ be the multiplier of the operator $M_{\lambda,\Rr}$. From \cite[Lemma 5]{Cook1} we have
	\begin{align*}
		\Omega_{\lambda,\Rr}(\xi)=\sum_{q=1}^{\infty}\sum_{a\in U_{q}}\sum_{b\in \mathbb{Z}_{q}^{n}}F_{q}(a,b)\mathscr{F}_{\R^n}(\zeta_{q})\left(\xi-\frac{b}{q}\right)\mathscr{F}_{\R^n}(d\tau_{\mathfrak{R}})\left(\lambda\left(\xi-\frac{b}{q}\right)\right)+O(\lambda^{-\eta_{\mathfrak{R}}}),
	\end{align*}
	where $F_{q}(a,b)$ and $\zeta$ are as defined earlier. The measure $d\tau_{\Rr}$ is given by
	\[d\tau_{\mathfrak{R}}(y)=\phi\left(y\right)\frac{d\nu(y)}{|\nabla\mathfrak{R}(y)|},\] where $d\nu$ is Euclidean surface measure on $\{x\in\mathbb{R}^{n}:\mathfrak{R}(x)=0\}$. Note that the measure $d\tau_{\Rr}$ satisfies the estimate \ref{bound of Fourier transform of sigma}.

	Now observe that, for any $2^l\leq\lambda\leq 2^{l+1}$ and $\operatorname{supp}\hat{f}\subset \Omega_{j,l}^c$, we have
	\small{\begin{align*}
		&\left\|M_{\lambda,\Rr}f\right\|_{\ell^2}\\
		&\lesssim\left\|\sum_{q=1}^{\infty}\sum_{a\in U_{q}}\sum_{b\in \mathbb{Z}_{q}^{n}}F_{q}(a,b)\mathscr{F}_{\R^n}(\zeta_{q})\left(\xi-\frac{b}{q}\right)\mathscr{F}_{\R^n}(d\tau_{\mathfrak{R}})\left(\lambda\left(\xi-\frac{b}{q}\right)\right)\hat{f}(\xi)\right\|_{L^2(\T^n)}+O(2^{-l\eta_{\mathfrak{R}}})\|f\|_{\ell^2}\\
		&\leq\left\|\sum_{q=1}^{\infty}\sum_{a\in U_{q}}\sum_{b\in \mathbb{Z}_{q}^{n}}\frac{1}{q^{c_{\Rr}}}\left|\mathscr{F}_{\R^n}(\zeta_{q})\left(\xi-\frac{b}{q}\right)\mathscr{F}_{\R^n}(d\tau_{\mathfrak{R}})\left(\lambda\left(\xi-\frac{b}{q}\right)\right)\right||\hat{f}(\xi)|\right\|_{L^2(\T^n)}+O(2^{-l\eta_{\mathfrak{R}}})\|f\|_{\ell^2}\\
		&\leq\left(\sum_{q=1}^{2^j}\frac{1}{q^{c_{\mathfrak{R}}-1}}\times\frac{1}{2^{j(c_{\mathfrak{R}}-1)}}+\sum_{q=2^j}^{\infty}\frac{1}{q^{c_{\mathfrak{R}}-1}}+O(2^{-l\eta_{\mathfrak{R}}})\right)\|f\|_{\ell^2}\\
		&\lesssim C_{j,l}\|f\|_{\ell^2},
	\end{align*}}
	\normalsize
	where $C_{j,l}=\text{max}\{2^{-j(c_{\mathfrak{R}}-2)}, 2^{-ld\eta_{\mathfrak{R}}}\}$. 
	
	This estimate along with the arguments given in  \Cref{proof of lambda jump for i=2,proof of lambda jump for i=3} completes the proof of \Cref{thm:jumpvariety}. \qed
	\section{Proof of \Cref{thm:variational transference principle}}\label{sec:ergodic}
		Let us fix $f\in L^p(X),~\eta>0,~\Lambda\in\N$, and for every $x\in X$ define the function 
		$$
    	\gamma_{x}(y)=\left\{\begin{array}{cc}
    	f\left(T_1^{y_1}\circ T_2^{y_2}\circ\cdots\circ T_n^{y_n}x\right), & \text { if } \|y\|_{\ell^{\infty}} \leq\Lambda^{\frac{1}{d}}\left(1+\frac{\eta}{n}\right), \\~\\
    	0, & \text { otherwise. }
    	\end{array}\right.
    	$$
		Also, for every positive integer $\lambda$, define the set 
		$G_{\lambda}=\left\{x\in\Z^n:\|x\|_{\ell^{\infty}}\leq \lambda^{\frac{1}{d}}\right\}.$
		For each $m\in G_{\Lambda}$ and $y\in G_{\lambda}$ with $\lambda^{\frac{1}{d}}<\Lambda^{\frac{1}{d}}\eta/n$, since we have $\|m-y\|_{\ell^{\infty}}\leq\Lambda^{\frac{1}{d}}\left(1+\frac{\eta}{n}\right)$, the following equality holds:
		\begin{align}
			\mathbb{A}_{\lambda}^{\Rr}f(T_1^{m_1}\circ T_2^{m_2}\circ\cdots\circ T_n^{m_n}x)=\frac{1}{r_{\Rr,\phi}(\lambda)}\sum_{y:\Rr(y)=\lambda}\phi\left(\frac{y}{\lambda^{\frac{1}{d}}}\right)\gamma_x(m-y)=A_{\lambda}^{\Rr}\gamma_x(m)\label{relation between Birch-Magyar avg. with its ergodic analg.}.
		\end{align}
		Now, \cref{variation for BM} along with \cref{relation between Birch-Magyar avg. with its ergodic analg.} we get
		\begin{align*}
			&\sum_{m\in G_{\Lambda}}|V_r(\mathbb{A}_{\lambda}^{\Rr}f(T_1^{m_1}\circ T_2^{m_2}\circ\cdots\circ T_n^{m_n}x):\lambda\in D\cap(0,\Lambda^{\frac{1}{d}}\eta/n))|^p\\
			&\leq \sum_{m\in G_{\Lambda}}|V_r(A_{\lambda}^{\Rr}\gamma_x(m):\lambda\in D\cap(0,\Lambda^{\frac{1}{d}}\eta/n))|^p\\
			&\leq \|V_r(A_{\lambda}^{\Rr}\gamma_x(m):\lambda\in D)\|_{\ell^p(\Z^n)}^p\lesssim \|\gamma_x\|_{\ell^p(\Z^n)}^p.
		\end{align*} 
		Therefore,
		\begin{align*}
			&\sum_{m\in G_{\Lambda}}\int_{x\in X}|V_r(\mathbb{A}_{\lambda}^{\Rr}f(T_1^{m_1}\circ T_2^{m_2}\circ\cdots\circ T_n^{m_n}x):\lambda\in D\cap(0,\Lambda^{\frac{1}{d}}\eta/n))|^pd\mu(x)\\
			&\lesssim \sum_{m\in G_{\Lambda\left(1+\frac{\eta}{n}\right)^d}}\int_{x\in X}|\gamma_{x}(m)|^pd\mu(x)\\
			&=\sum_{m\in G_{\Lambda\left(1+\frac{\eta}{n}\right)^d}}\int_{x\in X}|f(T_1^{m_1}\circ T_2^{m_2}\circ\cdots\circ T_n^{m_n}x)|^pd\mu(x).
		\end{align*}
		For each $i$, since $T_{i}^{m_i}$ is a measure presurving transformation, we get from the above inequality that
		\begin{align*}
			\sum_{m\in G_{\Lambda}}\|V_r(\mathbb{A}_{\lambda}^{\Rr}f:\lambda\in D\cap(0,\Lambda^{\frac{1}{d}}\eta/n))\|_{L^p(X)}^p
			&\lesssim \sum_{m\in G_{\Lambda\left(1+\frac{\eta}{n}\right)^d}}\|f\|_{L^p(X)}^p,
		\end{align*}
		which implies
		\begin{align*}
			\|V_r(\mathbb{A}_{\lambda,\phi}^{\Rr}f:\lambda\in D\cap(0,\Lambda^{\frac{1}{d}}\eta/n))\|_{L^p(X)}^p
			&\lesssim \frac{1}{(2\Lambda)^{\frac{n}{d}}}\left(2\Lambda^{\frac{1}{d}}\left(1+\frac{\eta}{n}\right)\right)^n\|f\|_{L^p(X)}^p.
		\end{align*}
		Applying $\Lambda\rightarrow\infty$ in the above inequality together with monotone convergence theorem we obtain that
		\begin{align*}
			\|V_r(\mathbb{A}_{\lambda,\phi}^{\Rr}f:\lambda\in D)\|_{L^p(X)}^p
			&\lesssim\left(1+\frac{\eta}{n}\right)^n\|f\|_{L^p(X)}^p.
		\end{align*}
		Now, taking $\eta\rightarrow 0^{+}$ we get \cref{variation for ergodic BM}, which completes the proof of \Cref{thm:variational transference principle}.\qed
	\subsection*{Acknowledgement}
	Ankit Bhojak is supported by the Science and Engineering Research Board, Department of Science and Technology, Govt. of India, under the scheme National Post-Doctoral Fellowship, file no. PDF/2023/000708. Siddhartha Samanta is supported by IISER Bhopal for PhD fellowship.
	\bibliography{bibliography}
\end{document}